\definecolor{mygray}{gray}{0.85}
\renewcommand{\leq}{\leqslant}
\renewcommand{\geq}{\geqslant}
\renewcommand{\trianglelefteq}{\trianglelefteqslant}
\def\subsection{\@startsection{subsection}{3}%
  \z@{.5\linespacing\@plus.7\linespacing}{.3\linespacing}%
  {\bfseries\centering}}
\def\subsubsection{\@startsection{subsubsection}{3}%
  \z@{.5\linespacing\@plus.7\linespacing}{.3\linespacing}%
  {\centering}}
\def\myfnt{\ifx\protect\@typeset@protect\expandafter\footnote\else\expandafter\@gobble\fi}
\newtheorem{theorem}{Theorem}%[section]
\newtheorem{corollary}[theorem]{Corollary}
\newtheorem{definition}[theorem]{Definition}
\newtheorem{lemma}[theorem]{Lemma}
\newtheorem{proposition}[theorem]{Proposition}
\newtheorem{hypothesis}[theorem]{Hypothesis}
\newtheorem{fact}[theorem]{Fact}
\newtheorem{remark}[theorem]{Remark}
\newtheorem{notation}[theorem]{Notation}
\newtheorem*{ntheorem}{Theorem}
\newtheorem*{theorem1}{Theorem 1}
\newcounter{claimcounter}
\begin{document}
%%%%%%%%%%%%%%%%%%

\dedicatory{Dedicated to the memory of Matti Rubin}

\begin{abstract} Let $\mathbf{K}$ be the class of countable structures $M$ with the strong small index property and locally finite algebraicity, and $\mathbf{K}_*$ the class of $M \in \mathbf{K}$ such that $acl_M(\{ a \}) = \{ a \}$ for every $a \in M$. For homogeneous $M \in \mathbf{K}$, we introduce what we call the expanded group of automorphisms of $M$, and show that it is second-order definable in $Aut(M)$. We use this to prove that for $M, N \in \mathbf{K}_*$, $Aut(M)$  and $Aut(N)$ are isomorphic as abstract groups if and only if $(Aut(M), M)$ and $(Aut(N), N)$ are isomorphic as permutation groups. In particular, we deduce that for $\aleph_0$-categorical structures the combination of strong small index property and no algebraicity implies reconstruction  up to bi-definability, in analogy with Rubin's well-known $\forall \exists$-interpretation technique of \cite{rubin}.
Finally, we show that every finite group can be realized as the outer automorphism group of $Aut(M)$ for some countable $\aleph_0$-categorical homogeneous structure $M$ with the strong small index property and no algebraicity.
\end{abstract}

\title[Reconstructing Structures with the SSIP up to Bi-Definability]{Reconstructing Structures with the Strong Small Index Property up to Bi-Definability}
\thanks{Partially supported by European Research Council grant 338821. No. 1109 on Shelah's publication list.}

\author{Gianluca Paolini}
\address{Einstein Institute of Mathematics,  The Hebrew University of Jerusalem, Israel}

\author{Saharon Shelah}
\address{Einstein Institute of Mathematics,  The Hebrew University of Jerusalem, Israel \and Department of Mathematics,  Rutgers University, U.S.A.}

%\date{\today}
\maketitle

%\tableofcontents

\section{Introduction}

	Reconstruction theory deals with the problem of reconstruction of countable structures from their automorphism groups. The first degree of reconstruction that it is usually dealt with is the so-called {\em reconstruction up to bi-interpretability}. The second and stronger degree of reconstruction is known as {\em reconstruction up to bi-definability}. In group theoretic terms, the first degree of reconstruction corresponds to reconstruction of {\em topological group isomorphisms} from isomorphisms of abstract group, while the second degree of reconstruction corresponds to reconstruction of {\em permutation group isomorphisms} from isomorphisms of abstract group. Two independent techniques lead the scene in this field: the (strong) small index property (see e.g. \cite{lascar_hodges_shelah}) and Rubin's $\forall \exists$-interpretation \cite{rubin}. 

	On the reconstruction up to bi-interpretability side the cornerstones of the theory are the following two results:

	\begin{ntheorem}[Rubin \cite{rubin}] Let $M$ and $N$ be countable $\aleph_0$-categorical structures and suppose that $M$ has a $\forall \exists$-interpretation. Then $Aut(M) \cong Aut(N)$ if and only if $M$ and $N$ are bi-interpretable.
\end{ntheorem}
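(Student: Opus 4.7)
The $(\Leftarrow)$ direction is routine: a bi-interpretation between $M$ and $N$ transports automorphisms canonically, producing a topological (and hence abstract) group isomorphism $Aut(M)\cong Aut(N)$. The substantive content is $(\Rightarrow)$. Fix an abstract group isomorphism $\phi:Aut(M)\to Aut(N)$. I read the hypothesis that $M$ has a $\forall\exists$-interpretation as saying: there are parameters $\bar{g}$ from $Aut(M)$ and $\forall\exists$-formulas $\varphi_{\mathrm{dom}}(\bar{x};\bar{z}),\ \varphi_{\mathrm{eq}}(\bar{x},\bar{y};\bar{z}),\ \{\varphi_R\}_R,\ \varphi_{\mathrm{act}}(\bar{x},\bar{y},w;\bar{z})$ in the pure language of groups such that, interpreted in $Aut(M)$ with parameters $\bar{g}$, they define a copy of $M$ together with its tautological $Aut(M)$-action.

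The plan is then to evaluate the same formulas in $Aut(N)$ at $\bar{h}:=\phi(\bar{g})$. Since $\phi$ preserves all first-order formulas, this produces a structure $M^{\star}$ carrying a faithful $Aut(N)$-action that, after pulling back along $\phi$, is isomorphic to the $Aut(M)$-action on $M$. The crux is to identify $M^{\star}$ with $N$, for which I would argue: (i) $M^{\star}$ is a countable oligomorphic $Aut(N)$-set whose point stabilizers are open subgroups of $Aut(N)$; (ii) by $\aleph_0$-categoricity, $Th(M^{\star})$ determines $M^{\star}$ up to isomorphism; (iii) $Th(M^{\star})=Th(N)$ because both $M$ and $N$ are canonically recovered by the same formulas and the $\forall\exists$ form is rigid enough that the canonical identification survives on the $Aut(N)$-side. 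Coupling this with the trivial interpretation of $Aut(M)$ (resp.\ $Aut(N)$) in $M$ (resp.\ $N$) as a permutation group yields the sought bi-interpretation.

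The main obstacle is step (iii): showing that $M^{\star}$ is genuinely $N$ and not merely some elementarily equivalent $Aut(N)$-set. The $\forall\exists$ form does real work here because it constrains $\bar{g}$ to be characterizable from group-theoretic data that $\phi$ must preserve, typically the data of an open subgroup serving as the stabilizer of a finite tuple. Without this rigidity, $\phi(\bar{g})$ could encode a ``twisted'' copy of $N$, and one would only recover bi-interpretability up to a further definable quotient. I expect the final identification to rely on the uniqueness of the countable $\aleph_0$-categorical model, together with a Beth-style argument upgrading implicit to explicit definability; the $\forall\exists$ quantifier pattern is exactly what makes the relevant witnesses canonically recoverable from $Aut(N)$ once its open subgroup structure is available.
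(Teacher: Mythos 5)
This statement is background quoted from Rubin; the paper gives no proof of it, so your proposal can only be judged on its own merits, and it has a genuine gap at exactly the point where the $\forall\exists$ hypothesis does its work. Transporting the interpreting formulas along $\phi$ does yield a countable faithful oligomorphic $Aut(N)$-set $M^{\star}$ which, via $\phi$, is a copy of $(Aut(M),M)$ with its action -- that much only uses preservation of first-order formulas by an abstract isomorphism. But your step (i), that the point stabilizers of elements of $M^{\star}$ are \emph{open} subgroups of $Aut(N)$, is precisely the crux, and you assert it without argument. A subgroup of $Aut(N)$ cut out by a group-theoretic formula with parameters has no a priori relation to the topology of $Aut(N)$; if openness were automatic, an interpretation of arbitrary quantifier complexity would suffice and the $\forall\exists$ restriction would be pointless. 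Rubin's argument earns openness (equivalently, that $\phi$ is a homeomorphism, whence bi-interpretability via the standard correspondence between topological isomorphisms of automorphism groups and bi-interpretations of $\aleph_0$-categorical structures) through a nontrivial Baire-category/genericity analysis of $\forall\exists$-definable sets inside the Polish group $Aut(N)$; appealing to the parameters being ``characterizable from group-theoretic data'' does not substitute for that analysis.

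Moreover, your steps (ii)--(iii) aim at the wrong target. The conclusion is that $M$ and $N$ are bi-interpretable, not that $M^{\star}\cong N$, and in general one should not expect $Th(M^{\star})=Th(N)$: $M^{\star}$ is a copy of (the canonical structure on) $M$, which need not be elementarily equivalent to $N$ at all -- indeed the point of the surrounding paper is that upgrading bi-interpretability to bi-definability requires additional hypotheses such as no algebraicity. So the proposed identification ``$Th(M^{\star})=Th(N)$ because both are recovered by the same formulas,'' and the appeal to uniqueness of the countable model plus a Beth-style argument, are both unjustified and beside the point. What is actually needed, once openness of the point stabilizers is established, is that $\phi$ carries pointwise stabilizers of finite subsets of $M$ to open subgroups of $Aut(N)$ (giving continuity of $\phi^{-1}$, with the reverse continuity from a Pettis/Baire argument for Polish groups), after which bi-interpretability follows; the continuity issue is the entire content of the theorem, and your sketch leaves it untouched.
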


	\begin{ntheorem}[Lascar \cite{lascar}] Let $M$ and $N$ be countable $\aleph_0$-categorical structures and suppose that $M$ has the small index property. Then $Aut(M) \cong Aut(N)$ if and only if $M$ and $N$ are bi-interpretable.
\end{ntheorem}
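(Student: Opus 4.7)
The plan is to use the small index property to recover the topology of $Aut(M)$ from its abstract group structure, so that any abstract group isomorphism $\phi : Aut(M) \to Aut(N)$ upgrades to a topological group isomorphism, and then to invoke the Ahlbrandt--Ziegler reconstruction theorem, which states that two countable $\aleph_0$-categorical structures are bi-interpretable if and only if their automorphism groups are isomorphic as topological groups.

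First I would give a purely group-theoretic characterization of the open subgroups of $Aut(M)$. With the topology of pointwise convergence, a basis of open neighborhoods of the identity consists of the pointwise stabilizers $Aut(M/A)$ for finite $A \subseteq M$; by $\aleph_0$-categoricity each such stabilizer has countable, hence small, index, so every open subgroup has small index. The SIP of $M$ supplies the converse: every subgroup of $Aut(M)$ of index less than $2^{\aleph_0}$ contains some pointwise stabilizer of a finite set, hence is open. Thus ``open'' coincides with ``small index'' in $Aut(M)$, a condition preserved by $\phi$. Since open subgroups of $Aut(N)$ also have countable index, a short additional argument shows that $\phi$ sends a neighborhood basis of the identity in $Aut(M)$ to a neighborhood basis of the identity in $Aut(N)$, making $\phi$ a homeomorphism.

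Once $\phi$ is known to be topological, the Ahlbrandt--Ziegler machinery identifies the elements of $M^{eq}$ with cosets of the open subgroups of $Aut(M)$, transports them via $\phi$ to cosets of open subgroups of $Aut(N)$, and reads off a bi-interpretation between $M$ and $N$. The converse direction of the biconditional --- that bi-interpretability implies (even topological) group isomorphism --- is routine. The main obstacle I expect is the asymmetry of the hypothesis: SIP is assumed only for $M$, so for the homeomorphism step one must argue that $\phi$-images of stabilizers, a priori only known to have small index in $Aut(N)$, are in fact open. The cleanest route is to isolate finite-tuple pointwise stabilizers inside the lattice of small-index subgroups of $Aut(M)$ by a group-theoretic feature that $\phi$ transfers directly (for example, membership in a cofinal chain of small-index subgroups whose intersection is trivial), thereby pinning down the pointwise stabilizers in $N^{eq}$ without invoking SIP on the $N$-side.
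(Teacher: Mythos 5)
This statement is quoted in the paper as background (it is Lascar's theorem, cited from \cite{lascar}); the paper gives no proof of it, so there is nothing internal to compare against, and your proposal must be judged against the standard argument. Your overall strategy is exactly the standard one: use SIP of $M$ together with $\aleph_0$-categoricity to show that ``open'' and ``index $< 2^{\aleph_0}$'' coincide appropriately, upgrade the abstract isomorphism $\phi\colon Aut(M)\to Aut(N)$ to a topological one, and then quote Ahlbrandt--Ziegler/Coquand to get bi-interpretability; the converse direction is indeed routine. The first half of your argument is fine: basic open subgroups of $Aut(N)$ have countable index (this already holds for any countable structure, since the index of a pointwise stabilizer of a finite tuple is bounded by the number of its orbit images), so $\phi^{-1}$ of any open subgroup of $Aut(N)$ has small index in $Aut(M)$, hence by SIP contains a pointwise stabilizer of a finite set and is therefore open; thus $\phi$ is continuous.

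The genuine gap is in how you propose to handle the asymmetry, i.e.\ the continuity of $\phi^{-1}$. Your suggested fix --- characterizing the finite-tuple pointwise stabilizers inside the lattice of small-index subgroups by ``membership in a cofinal chain of small-index subgroups with trivial intersection'' --- will not work with SIP alone: plain SIP only locates a small-index subgroup between a pointwise and a setwise stabilizer, and distinguishing the pointwise stabilizers of finite subsets of the structure itself among small-index subgroups is precisely what requires the \emph{strong} SIP plus hypotheses on algebraicity (this is the whole point of the present paper, cf.\ Propositions \ref{normality_prop}--\ref{char_Lpoint_stab}); moreover such chain conditions are satisfied by many subgroups other than point stabilizers, e.g.\ stabilizers of imaginaries. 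Fortunately the step you are worried about needs no such characterization: $Aut(M)$ and $Aut(N)$ are Polish groups, and a continuous bijective homomorphism between Polish groups is automatically open (open mapping/Baire category theorem), so once continuity of $\phi$ is established via SIP of $M$, $\phi$ is already a topological isomorphism and Ahlbrandt--Ziegler applies. Replace your last paragraph by this standard automatic-openness argument and the proof is complete.
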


	On the reconstruction up to bi-definability side, all the known results are based on the following theorem of Rubin:

	\begin{ntheorem}[Rubin \cite{rubin}] Let $M$ and $N$ be countable $\aleph_0$-categorical structures with no algebraicity and suppose that $M$ has a $\forall \exists$-interpretation. Then $Aut(M) \cong Aut(N)$ if and only if $M$ and $N$ are bi-definable.
\end{ntheorem}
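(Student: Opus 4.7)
The plan is to first invoke Rubin's earlier theorem on $\forall\exists$-interpretations to deduce that $M$ and $N$ are bi-interpretable from the hypothesis $Aut(M) \cong Aut(N)$, and then to upgrade this bi-interpretation to a bi-definition using the no-algebraicity hypothesis on both sides. Fix an abstract isomorphism $\phi : Aut(M) \to Aut(N)$. The $\forall\exists$-interpretation uniformly recovers the permutation group $(Aut(M), M)$ from the pure group $Aut(M)$; transporting along $\phi$ produces an $Aut(N)$-set $M^{*}$ together with an $Aut(N)$-equivariant bijection between $M$ and $M^{*}$ (once $Aut(M)$ is identified with $Aut(N)$ via $\phi$). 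Being obtained from a $\forall\exists$-definable construction, $M^{*}$ can be realized as an imaginary sort of $N$: there exist a $\emptyset$-definable $S \subseteq N^{k}$ and a $\emptyset$-definable equivalence relation $E$ on $S$ such that $M^{*}$ is $Aut(N)$-equivariantly identified with $S/E$.

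The goal is then to show that $S/E$ is in $\emptyset$-definable bijection with $N$ (and symmetrically, by interchanging the roles of $M$ and $N$). For each $a \in M$, the stabilizer $Aut(M)_a$ corresponds under $\phi$ to the stabilizer in $Aut(N)$ of a class $[c_a] \in S/E$ matching $a$; each such class is represented by a tuple $\bar c \in S \subseteq N^{k}$. One must argue that $[c_a]$ is $\emptyset$-interdefinable with a single element of $N$. This is where no algebraicity for $N$ is used: since $acl_N(b) = \{b\}$ for every $b \in N$, any imaginary whose stabilizer has the \emph{minimal} orbit type (unique fixed point, all other orbits infinite) must be interdefinable with a real element, since otherwise the coding tuple $\bar c$ would produce a non-trivial finite algebraic set, contradicting the hypothesis. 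This collapses $S/E$ to $N$ via a $\emptyset$-definable bijection, and a symmetric argument provides the reverse definition.

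The main obstacle is precisely this last step: a clean proof that no \emph{genuinely} imaginary sort $S/E$ of $N^{eq}$ can carry the same $Aut(N)$-action-type as $N$ itself once no algebraicity is imposed. Concretely, one needs a purely group-theoretic characterization of point stabilizers $Aut(N)_b$ (for $b \in N$) as the open subgroups of $Aut(N)$ having a unique fixed point and no other finite orbits on the canonically interpreted sort, distinguishing them from stabilizers of imaginary elements, whose orbit structure (inherited from the coding tuples) necessarily contains finite non-singleton orbits under no algebraicity. Once this characterization is secured, $\phi$ must carry point stabilizers to point stabilizers, and bi-definability follows by tracing the interpretation machinery back through $\phi$.
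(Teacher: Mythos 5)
You are proving a statement that the paper itself does not prove: it is quoted as background, attributed to Rubin \cite{rubin}, and the paper's own contribution (Theorem \ref{main_theorem} and Corollary \ref{reconstr_bi-def}) is an analogue with the strong small index property in place of a $\forall\exists$-interpretation. So your attempt must stand on its own, and it does not: the argument stalls exactly where you say it does, and the criterion you propose for filling the gap is false. Your plan is to realize the transported copy $M^{*}$ of $M$ as a $\emptyset$-definable quotient $S/E$ in $N^{eq}$ and then collapse it onto $N$ by characterizing point stabilizers $Aut(N)_b$ as the open subgroups with a unique fixed point and no other finite orbits, claiming that under no algebraicity a genuinely imaginary element's stabilizer must exhibit a finite non-singleton orbit (via its coding tuple). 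That is not so: coding tuples need not be algebraic over the imaginary they code. Concretely, let $N$ be the $\aleph_0$-categorical structure consisting of an equivalence relation with infinitely many infinite classes; it has no algebraicity, yet the imaginary $e$ coded by a pair $(a,C)$, where $C$ is a class with $a \notin C$, has stabilizer fixing exactly the point $a$ and having all other orbits on $N$ infinite, while $e$ is not interdefinable with $a$ (its stabilizer is a proper open subgroup of $Aut(N)_a$). Hence an abstract isomorphism could a priori send a point stabilizer to such a subgroup, and your orbit-type test cannot detect this; the collapse of $S/E$ to $N$ does not follow.

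What is genuinely needed at this step is a stronger input than orbit types. Rubin's own proof extracts the identification of the point stabilizers from the $\forall\exists$-interpretation itself and then passes from the resulting permutation-group isomorphism $(Aut(M),M) \cong (Aut(N),N)$ to bi-definability via his Proposition 1.3 (Fact \ref{rubin_prop} in this paper) --- that last conversion is the one part of your outline that is unproblematic. The paper's SSIP-based analogue illustrates the kind of substitute that works: one first classifies \emph{all} subgroups of index $< 2^{\aleph_0}$ as the groups $G_{(K,L)}$ (Lemma \ref{char_subgr_small_index}), then identifies the singleton stabilizers lattice-theoretically as stabilizers of minimal algebraically closed sets (Propositions \ref{char_point_stab} and \ref{prop_for_recons}), which is precisely the kind of characterization your orbit-type condition fails to provide. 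As written, your proposal is an honest reduction with an acknowledged hole, but the suggested way of closing the hole is wrong.
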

	
	In particular, on the small index property side there is no result that pairs with the last cited result of Rubin. In this paper we fill this gap proving the following:
	
	\begin{theorem}\label{main_theorem} Let $\mathbf{K}_*$ be the class of countable structures $M$ satisfying:
	\begin{enumerate}[(1)]
	\item $M$ has the strong small index property;
	\item for every finite $A \subseteq M$, $acl_M(A)$ is finite;
	\item for every $a \in M$, $acl_M(\{ a \}) = \{ a \}$.
\end{enumerate}
Then for $M, N \in \mathbf{K}_*$, $Aut(M)$  and $Aut(N)$ are isomorphic as abstract groups if and only if $(Aut(M), M)$ and $(Aut(N), N)$ are isomorphic as permutation groups. Moreover, if $\pi: Aut(M) \cong Aut(N)$ is an abstract group isomorphism, then there is a bijection $f: M \rightarrow N$ witnessing that $(Aut(M), M)$ and $(Aut(N), N)$ are isomorphic as permutation groups and such that $\pi(\alpha) = f \alpha f^{-1}$.	
\end{theorem}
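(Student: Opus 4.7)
The plan is to proceed by a reconstruction argument that identifies points of $M$ with certain canonical self-normalizing open subgroups of $\operatorname{Aut}(M)$, leveraging the second-order definability of the expanded automorphism group announced in the abstract.

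First, I would reduce to the case where $M$ is homogeneous. Given $M \in \mathbf{K}_*$, pass to the canonical homogeneous cover $M^h$ obtained by naming every $\operatorname{Aut}(M)$-orbit on finite tuples by a new relation symbol. Since $\operatorname{Aut}(M^h) = \operatorname{Aut}(M)$ as permutation groups, bi-definability of $M$ with $N$ is equivalent to bi-definability of $M^h$ with $N^h$, and properties (1)--(3) transfer, so this replacement is harmless. Then I would invoke the main technical input developed earlier in the paper: the expanded automorphism group $\widehat{\operatorname{Aut}}(M)$ of a homogeneous $M \in \mathbf{K}$ is second-order definable in $\operatorname{Aut}(M)$. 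Consequently, an abstract isomorphism $\pi : \operatorname{Aut}(M) \to \operatorname{Aut}(N)$ lifts canonically to an isomorphism of expanded groups, and in particular any object recovered by a second-order formula from $\operatorname{Aut}(M)$ corresponds to the analogous object in $\operatorname{Aut}(N)$.

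The crucial step is then to recover the point-level bijection $f: M \to N$. Using condition (3), for each $a \in M$ the stabilizer $H_a := \operatorname{Stab}_{\operatorname{Aut}(M)}(a)$ coincides with the pointwise stabilizer $\operatorname{Fix}_M(\{a\})$, and, since it already contains every automorphism that fixes $\operatorname{acl}_M(\{a\}) = \{a\}$ setwise, it is self-normalizing in $\operatorname{Aut}(M)$. Combined with the SSIP (which forces every open subgroup of small index to be a finite extension of a pointwise stabilizer of some finite set, with the finite quotient acting on the algebraic closure), this yields a purely group-theoretic description of the family $\{H_a : a \in M\}$: these are the self-normalizing open subgroups of index equal to the cardinality of a single $\operatorname{Aut}(M)$-orbit on $M$, definable from the second-order structure on $\operatorname{Aut}(M)$ supplied by the expanded group. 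The isomorphism $\pi$ thus induces the desired bijection $f: M \to N$ via $f(a) = b \iff \pi(H_a) = H_b$.

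It remains to verify that $f$ implements $\pi$, i.e.\ $\pi(\alpha) = f \alpha f^{-1}$ for all $\alpha \in \operatorname{Aut}(M)$. This follows from the conjugation identity $\operatorname{Stab}(\alpha(a)) = \alpha \operatorname{Stab}(a) \alpha^{-1}$ together with the respect of $\pi$ for conjugation: $\pi(\alpha) H_{f(a)} \pi(\alpha)^{-1} = \pi(\alpha H_a \alpha^{-1}) = \pi(H_{\alpha(a)}) = H_{f(\alpha(a))}$, so the only point of $N$ that $\pi(\alpha)$ can send $f(a)$ to is $f(\alpha(a))$. The main obstacle I anticipate is the second point above, namely characterizing the point stabilizers $H_a$ among all open subgroups in abstract group-theoretic terms; this is precisely what the second-order definability of $\widehat{\operatorname{Aut}}(M)$ is designed to achieve, and condition (3) is indispensable to rule out the ambiguity between stabilizers of single points and stabilizers of larger algebraically closed finite sets (where one would only recover bi-interpretability, as in the classical results of Lascar and Rubin).
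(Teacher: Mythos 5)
Your overall architecture (reduce to the homogeneous case via the canonical relational structure, lift the abstract isomorphism through the second-order definable expanded group, recover a point bijection from a group-theoretic identification of the point stabilizers, then verify $\pi(\alpha)=f\alpha f^{-1}$ by the conjugation identity on stabilizers) is the same as the paper's, and your last step is essentially the paper's computation. The genuine gap is in the one step you yourself flag as the crux: your proposed abstract characterization of the family $\{G_{(a)} : a \in M\}$ is not correct. ``Self-normalizing subgroups of index less than $2^{\aleph_0}$ whose index equals the cardinality of an $\operatorname{Aut}(M)$-orbit'' does not single out point stabilizers: by the injectivity of $(K,L)\mapsto G_{(K,L)}$, the setwise stabilizer $G_{\{K\}}$ of any finite algebraically closed $K$ is also self-normalizing (if $gG_{\{K\}}g^{-1}=G_{\{g(K)\}}=G_{\{K\}}$ then $g(K)=K$), and more generally $G_{(K,L)}$ is self-normalizing whenever $L$ is self-normalizing in $Aut(K)$; moreover all of these typically have index $\aleph_0$, which is also the typical orbit cardinality, so the index clause has no separating power (the abstract group does not yet ``know'' the action on $M$, so orbit sizes cannot be invoked except as bare cardinals). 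Concretely, in the random graph the setwise stabilizer of an edge $\{a,b\}$ is a self-normalizing subgroup of countable index (indeed even a maximal proper small-index subgroup), yet it is not a point stabilizer; your criterion would admit it and the induced ``bijection'' would not be well defined on points.

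What is actually needed, and what the paper does, is a two-stage filtration inside the family $\mathcal{SS}(M)$ of all small-index subgroups (which SSIP plus locally finite algebraicity identifies with the subgroups $G_{(K,L)}$): first, the pointwise stabilizers $\mathcal{PS}(M)=\{G_{(K)} : K \in \mathbf{A}(M)\}$ are exactly those $H\in\mathcal{SS}(M)$ admitting no proper normal subgroup of finite index that again lies in $\mathcal{SS}(M)$ (this is Proposition \ref{char_point_stab}, resting on Proposition \ref{normality_prop}; it is precisely what excludes the setwise stabilizers and the intermediate $G_{(K,L)}$ that defeat your criterion); second, among $\mathcal{PS}(M)$ one takes the maximal proper elements under inclusion, which by condition (3) correspond to the minimal nonempty algebraically closed sets, i.e.\ the singletons (Proposition \ref{prop_for_recons} via $P^{min}_{\mathbf{A}(M)}$). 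Your appeal to ``this is what the second-order definability of the expanded group is designed to achieve'' gestures at the right machinery, but the definability of $ExAut(M)$ only transports whatever invariant characterization one has actually proved; without the normality/finite-index analysis above (or some correct substitute), the bijection $f$ is not obtained. The remainder of your argument (self-normalization giving $\pi(H_a)$ again of the distinguished form, and the conjugation computation, which also tacitly uses that $a\mapsto G_{(a)}$ is injective -- true here since $dcl_M(\{a\})=\{a\}$) goes through once this step is repaired.
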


	Thus deducing an analog of Rubin's result on reconstruction up to bi-definability:

	\begin{corollary}\label{reconstr_bi-def} Let $M$ and $N$ be countable $\aleph_0$-categorical structures with the strong small index property and no algebraicity. Then $ Aut(M)$ and $Aut(N)$ are isomorphic as abstract groups if and only if $M$ and $N$ are bi-definable. Moreover, if $\pi: Aut(M) \cong Aut(N)$ is an abstract group isomorphism, then there is a bijection $f: M \rightarrow N$ witnessing the bi-definability of $M$ and $N$ such that $\pi(\alpha) = f \alpha f^{-1}$.	
%	Furthermore, if $f: M \rightarrow N$ witnesses the bi-definability of $M$ and $N$, then $f$ induces the isomorphism of abstract groups $\pi_f: Aut(M) \cong Aut(N)$ given by $\alpha \mapsto f \alpha f^{-1}$.
\end{corollary}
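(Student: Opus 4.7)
The plan is to reduce directly to Theorem~\ref{main_theorem}. First I would verify that the hypotheses of the corollary place $M$ and $N$ into the class $\mathbf{K}_*$: the assumption of \emph{no algebraicity}, i.e.\ $acl_M(A) = A$ for every finite $A \subseteq M$, trivially implies both condition (2) (local finiteness of algebraicity) and condition (3) ($acl_M(\{a\}) = \{a\}$) of the definition of $\mathbf{K}_*$; condition (1) is the assumed strong small index property. The same verification applies to $N$.

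Given an abstract group isomorphism $\pi: Aut(M) \to Aut(N)$, Theorem~\ref{main_theorem} supplies a bijection $f: M \to N$ such that $\pi(\alpha) = f \alpha f^{-1}$ for every $\alpha \in Aut(M)$; in particular $f\,Aut(M)\,f^{-1} = Aut(N)$. To upgrade this permutation-group isomorphism to bi-definability I would invoke the Ryll-Nardzewski theorem: since both $M$ and $N$ are $\aleph_0$-categorical, the $\emptyset$-definable relations on $M^n$ are exactly the unions of $Aut(M)$-orbits on $M^n$, and analogously for $N$. The coordinatewise action of $f$ carries $Aut(M)$-orbits on $M^n$ bijectively onto $Aut(N)$-orbits on $N^n$, and therefore $\emptyset$-definable relations of $M$ to $\emptyset$-definable relations of $N$ in both directions. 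This is exactly the assertion that $f$ witnesses the bi-definability of $M$ and $N$, and since this is the same $f$ produced by Theorem~\ref{main_theorem}, the ``moreover'' clause is obtained simultaneously.

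The reverse implication is routine: any bijection $f: M \to N$ witnessing bi-definability conjugates $Aut(M)$ isomorphically onto $Aut(N)$ inside $Sym(N)$, yielding in particular an abstract group isomorphism. I do not expect any genuine obstacle beyond what is already absorbed into Theorem~\ref{main_theorem}; the Ryll-Nardzewski translation between orbits and $\emptyset$-definable relations is standard and needs no separate argument.
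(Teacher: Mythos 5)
Your proof is correct and takes essentially the same route as the paper: check that SSIP plus no algebraicity puts $M,N$ in $\mathbf{K}_*$, invoke Theorem \ref{main_theorem} to obtain the conjugating bijection $f$, and then translate the permutation-group isomorphism into bi-definability, with the ``moreover'' clause inherited from that of Theorem \ref{main_theorem}. The only (inessential) difference is that you prove the translation step directly via Ryll--Nardzewski, where the paper simply cites Rubin's Proposition 1.3 (Fact \ref{rubin_prop}), which encapsulates the same standard argument.
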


%	Notice that Theorem \ref{main_theorem} is stronger than Corollary \ref{reconstr_bi-def} since it does not assume $\aleph_0$-categoricity and it only assumes that singletons are algebraically closed, while the assumption of no algebraicity means that $acl_M(A) = A$	 for {\em every} subset $A$~of~$M$.
	
	For a structure $M$ satisfying the conclusion of Theorem \ref{main_theorem} it is easy to determine the outer automorphism group of $Aut(M)$, in fact any $f \in Aut(Aut(M))$ is induced by a permutation of $M$. For example, as already noted by Rubin in \cite{rubin}, using this fact it is  easy to see that for $R_n$ the $n$-coloured random graph ($n \geq 2$) we have that $Out(Aut(R_n)) \cong Sym(n)$. Similarly, but in a different direction, one easily sees that for $M_n$ the $K_n$-free random graph ($n \geq 3$) we have that $Aut(M_n)$ is complete. We show here that in this setting any finite group can occur:

	\begin{theorem}\label{finite_grps} Let $K$ be a finite group. Then there exists a countable $\aleph_0$-categorical homogeneous structure $M$ with the strong small index property and no algebraicity such that $K \cong Out(Aut(M))$.
\end{theorem}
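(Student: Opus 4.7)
The plan is to reduce via Theorem \ref{main_theorem} to the problem of constructing, for each finite group $K$, a structure $M_K \in \mathbf{K}_*$ that is $\aleph_0$-categorical and homogeneous and whose ``external'' symmetries modulo $Aut(M_K)$ form a copy of $K$. First I would observe that for any such $M$, Theorem \ref{main_theorem} implies that $Aut(Aut(M))$ equals the image of $N_{Sym(M)}(Aut(M))$ under the conjugation action, whose kernel is $C_{Sym(M)}(Aut(M))$. Using $acl_M(\{a\}) = \{a\}$ together with transitivity of $Aut(M)$ on each of its orbits on $M$, a short argument shows that $C_{Sym(M)}(Aut(M)) = 1$ (any centralising $f$ sends $a$ to a point fixed by $\mathrm{Stab}_{Aut(M)}(a)$, hence to $a$ itself), and in particular $Z(Aut(M)) = 1$, giving
$$Out(Aut(M))\;\cong\;N_{Sym(M)}(Aut(M))/Aut(M).$$
So it suffices to exhibit $M_K \in \mathbf{K}_*$ (homogeneous, $\aleph_0$-categorical) with $N_{Sym(M_K)}(Aut(M_K))/Aut(M_K) \cong K$.

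For the construction, by Frucht's theorem fix a finite graph $H = H_K$ with $Aut(H) \cong K$. Let $\mathcal{L}_K$ be the finite relational language with unary predicates $\{P_v : v \in V(H)\}$ and a symmetric irreflexive binary edge relation $E$, and let $\mathbf{C}_K$ be the class of finite $\mathcal{L}_K$-structures satisfying: (i) $\{P_v\}$ partitions the universe; (ii) for $x \in P_v,\ y \in P_w$ with $v \neq w$, $E(x,y)$ is allowed only when $\{v,w\} \in E(H)$; and (iii) the structure is globally $K_n$-free for some fixed $n \geq 3$, in order to suppress complement-type symmetries within a single part (as in the Henson graph example of the preceding paragraph) and between pairs of adjacent parts. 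One checks that $\mathbf{C}_K$ is a free-amalgamation Fraïssé class, so its Fraïssé limit $M_K$ is countable, $\aleph_0$-categorical and homogeneous, has no algebraicity (standard for free amalgamation), and satisfies the strong small index property by adaptation of the Hodges--Hodkinson--Lascar--Shelah argument and its extensions to free-amalgamation Fraïssé limits.

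Finally, I would compute $N_{Sym(M_K)}(Aut(M_K))/Aut(M_K)$. Since the $Aut(M_K)$-orbits on singletons are exactly the parts $P_v$, any $f$ in the normaliser induces a permutation $\sigma \in Sym(V(H))$ of the parts. The number of $Aut(M_K)$-orbits on pairs from $P_v \times P_w$ (two orbits when $v \neq w$ and $\{v,w\} \in E(H)$, one orbit otherwise) must be preserved by $\sigma$, which forces $\sigma \in Aut(H) = K$. Conversely, for each $\sigma \in Aut(H)$ the $K$-invariance of $\mathbf{C}_K$ and the homogeneity of $M_K$ produce an isomorphism of $M_K$ with its $\sigma$-relabelled copy; reading this as a permutation of $M_K$ yields a normaliser element acting as $\sigma$ on the parts. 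Together with the reduction in the first paragraph, this gives $Out(Aut(M_K)) \cong K$.

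The main technical obstacles are (a) designing condition (iii) so that it kills \emph{all} complement-type (and more generally, bipartite self-dual) outer automorphisms while preserving free amalgamation and the other Fraïssé conditions, and (b) verifying the strong small index property for $M_K$, which requires extending the existing SSIP technology to this $H$-multipartite, $K_n$-free setting.
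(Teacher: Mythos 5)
Your strategy is the paper's strategy: reduce via Theorem \ref{main_theorem} (equivalently Corollary \ref{reconstr_bi-def}) to computing $N_{Sym(M)}(Aut(M))/Aut(M)$ --- your centralizer argument, using that $acl_M(\{a\})=\{a\}$ forces the fixed-point set of $G_{(a)}$ to be $\{a\}$, is correct --- and then realize $K$ by taking a Frucht graph $H$ and the generic $H$-multipartite structure, with SSIP coming from the free-amalgamation criterion of \cite{ssip_canonical_hom}. Your construction differs in detail from the paper's (the paper uses a separate relation $R_{\ell,k}$ for each edge of $\Gamma$, no within-part edges and no global constraint; you use one edge relation, within-part edges, and global $K_n$-freeness). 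Your worry (b) is unfounded: your class forbids only irreducible configurations ($K_n$ and a single cross-edge between $H$-non-adjacent parts), so it is a free amalgamation class and \cite{ssip_canonical_hom} applies verbatim, exactly as in the paper; no new SSIP technology is needed.

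The genuine gap is the one you flag as obstacle (a) and then leave open: your orbit-counting only shows that the induced permutation of the parts lies in $Aut(H)$ and that every element of $Aut(H)$ is realized; you never prove injectivity, i.e.\ that a normalizing permutation inducing the identity on the parts already lies in $Aut(M_K)$. Without that, $Out(Aut(M_K))$ could be strictly larger than $K$, and this is precisely where complement-type reinterpretations live (this is not excessive caution: in the paper's own $M_\Gamma$ the relation between two adjacent parts is a generic bipartite graph, and any isomorphism onto its bipartite complement is a part-fixing permutation normalizing $Aut(M_\Gamma)$ without belonging to it, so some device like your condition (iii) really is needed for the count to come out right). Fortunately your design does close the gap, and you should argue it: a part-fixing normalizer $f$ permutes the $Aut(M_K)$-orbits on pairs inside each $P_v^2$ and inside each $P_v\times P_w$. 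If it swapped edges and non-edges inside some $P_v$, then $f$ would carry the generic $K_n$-free graph on $P_v$ onto its complement, which contains $K_n$ (any $n$ pairwise non-adjacent vertices), contradicting $K_n$-freeness of the transported structure $f(M_K)\cong M_K$. Having fixed the within-part orbits, if $f$ swapped cross-edges and cross-non-edges between adjacent $P_v,P_w$, then a copy of $K_{n-1}$ inside $P_v$ together with a vertex of $P_w$ joined to none of it (both exist by genericity) becomes a $K_n$ in $f(M_K)$, again a contradiction. Hence $f$ fixes every orbit on pairs, so it preserves each $P_v$ and $E$, i.e.\ $f\in Aut(M_K)$, and $N_{Sym(M_K)}(Aut(M_K))/Aut(M_K)\cong Aut(H)\cong K$ as desired.
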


	Our main technical tool is what we call the {\em expanded group of automorphisms} of an homogeneous structure $M$ with the strong small index property and locally finite algebraicity. This powerful object encodes the combinatorics of $Aut(M)$-stabilizers of such a structure $M$, and it is a crucial ingredient of our proof of Theorem \ref{main_theorem}. In Theorem \ref{exp_auto_th} we show that the expanded group of automorphisms is second-order orbit-definable in $Aut(M)$ (cf. Definition \ref{orbit_def}), a fact of essential importance.

	The results of this paper pair with those of \cite{ssip_canonical_hom} and \cite{Sh_Pa_Hall}, where sufficient conditions for strong small index property are isolated and applied in the concrete case of the group of automorphisms of Hall's universal locally finite group. Finally, we would like to mention another recent result proved by us (see \cite{non_reconstr}) in this area, i.e. the following powerful non-reconstruction theorem: no algebraic or topological property of $Aut(M)$ can detect any form of stability of the countable structure $M$.

\section{The Expanded Group of Automorphisms}

	In this section we introduce the expanded group of automorphisms of $M$ (for certain $M$), and show that it is second-order definable in $Aut(M)$.
	
\smallskip

	Given a structure $M$ and $A \subseteq M$, and considering $Aut(M) = G$ in its natural action on $M$, we denote the pointwise (resp. setwise) stabilizer of $A$ under this action by $G_{(A)}$ (resp. $G_{\{ A \}}$). Also, we denote the subgroup relation by $\leq$.

\begin{definition}\label{algebraicity} Let $M$ be a structure and $G = Aut(M)$.
	\begin{enumerate}[(1)]
	\item We say that $a$ is algebraic (resp. definable) over $A \subseteq M$ in $M$ if the orbit of $a$ under $G_{(A)}$ is finite (resp. trivial). 
	\item The {\em algebraic closure} of $A \subseteq M$ in $M$, denoted as $acl_M(A)$, is the set of elements of $M$ which are algebraic over $A$.
	\item The {\em definable closure} of $A \subseteq M$ in $M$, denoted as $dcl_M(A)$, is the set of elements of $M$ which are definable over $A$.
\end{enumerate}
\end{definition}

\begin{definition} Let $M$ be a countable structure and $G = Aut(M)$.
	\begin{enumerate}[(1)]
	\item We say that $M$ (or $G$) has the {\em small index property} (SIP) if every subgroup of $Aut(M)$ of index less than $2^{\aleph_0}$ contains the pointwise stabilizer of a finite set $A \subseteq M$. 
	\item We say that $M$ (or $G$) has the {\em strong small index property} (SSIP) if every subgroup of $Aut(M)$ of index less than $2^{\aleph_0}$ lies between the pointwise  and the setwise stabilizer of a finite set $A \subseteq M$.
\end{enumerate}
\end{definition}

	\begin{hypothesis}\label{hyp} Throughout this section, let $M$ be a countable homogeneous structure with the strong small index property and locally finite algebraicity, i.e. for every finite $A \subseteq M$ we have $|acl_M(A)| < \omega$.
\end{hypothesis} 

	\begin{remark}\label{remark_cat} %Concerning Hypothesis \ref{hyp}, 
	Notice that all $\omega$-categorical structures have locally finite algebraicity.
\end{remark}

	\begin{notation} 
	\begin{enumerate}[(1)]
	\item We let $\mathbf{A}(M) = \{ acl_M(B) : B \subseteq_{fin} M \}$.
	\item We let $\mathbf{EA}(M) = \{ (K, L) : K \in \mathbf{A}(M) \text{ and } L \leq Aut(K) \}$.
\end{enumerate}
\end{notation}  %We aim at defining what we will call the {\em expanded group of automorphisms of $M$}. %We first need some preparatory work. 

	\begin{definition} Let $(K, L) \in \mathbf{EA}(M)$, we define:
	$$ G_{(K, L)} = \{ f \in Aut(M) : f \restriction K \in L \}.$$
\end{definition}

Notice that if $L = \{ id_K \}$, then  $G_{(K, L)} = G_{(K)}$, i.e. it equals the pointwise stabilizer of $K$, and that if $L = Aut(K)$, then $G_{(K, L)} = G_{\{ K \}}$, i.e. it equals the setwise stabilizer of $K$. We then let:
$$\mathcal{PS}(M) = \{ G_{(K)} : K \in \mathbf{A}(M) \} \; \text{ and } \; \mathcal{SS}(M) =\{ G_{(K, L)} : (K, L) \in \mathbf{EA}(M) \}.$$
The crucial point is the following:
	
	\begin{lemma}\label{char_subgr_small_index} Let $\mathcal{G} = \{ H \leq G : [G:H] < 2^\omega \}$. Then $\mathcal{G} = \mathcal{SS}(M)$.
\end{lemma}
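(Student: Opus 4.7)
The plan is to prove both inclusions separately, with the harder direction being $\mathcal{G} \subseteq \mathcal{SS}(M)$.

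For $\mathcal{SS}(M) \subseteq \mathcal{G}$, let $(K,L) \in \mathbf{EA}(M)$ with $K = acl_M(B)$ for some finite $B \subseteq M$. By Hypothesis \ref{hyp}, $K$ is finite. Enumerate $K$ as a tuple $\bar{k}$; its $G$-orbit lies inside the countable set $M^{|K|}$, so $[G : G_{(K)}] \leq \aleph_0$. Since $G_{(K)} \leq G_{(K,L)}$ (automorphisms fixing $K$ pointwise certainly restrict to $\mathrm{id}_K \in L$, provided $\mathrm{id}_K \in L$, which holds as $L$ is a subgroup of $\mathrm{Aut}(K)$), we conclude $[G : G_{(K,L)}] \leq \aleph_0 < 2^{\aleph_0}$.

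For $\mathcal{G} \subseteq \mathcal{SS}(M)$, fix $H \leq G$ with $[G:H] < 2^{\aleph_0}$. Applying the SSIP, pick a finite $A \subseteq M$ with $G_{(A)} \leq H \leq G_{\{A\}}$, and set $K := acl_M(A) \in \mathbf{A}(M)$, which is finite. Two observations drive the argument: first, since $Aut(M)$ preserves algebraic closure, any setwise stabilizer of $A$ is a setwise stabilizer of $K$, so $H \leq G_{\{A\}} \leq G_{\{K\}}$; second, $G_{(K)} \leq G_{(A)} \leq H$ (pointwise stabilizers are monotone with respect to inclusion of the fixed set). Consider the restriction homomorphism
\[
\rho : G_{\{K\}} \longrightarrow Aut(K), \qquad \rho(f) = f \restriction K,
\]
whose codomain is indeed $Aut(K)$ because $K$ is an induced substructure of $M$. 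The kernel of $\rho$ is precisely $G_{(K)}$, which is contained in $H$.

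Define $L := \rho(H)$, a subgroup of $Aut(K)$. Since $H \leq G_{\{K\}}$ and $\ker \rho \leq H$, we have $H = \rho^{-1}(\rho(H)) = \rho^{-1}(L)$, where the preimage is taken inside $G_{\{K\}}$. But unwinding the definition, $\rho^{-1}(L) = \{ f \in G_{\{K\}} : f\restriction K \in L \} = \{ f \in Aut(M) : f\restriction K \in L \} = G_{(K,L)}$, the last equality because $f \restriction K \in L \leq Aut(K)$ forces $f(K) = K$. Hence $H = G_{(K,L)} \in \mathcal{SS}(M)$, as desired. The only place where any care is needed is in checking that $H$ is automatically contained in $G_{\{K\}}$ and contains $G_{(K)}$; these facts follow formally from the SSIP sandwich and the $Aut(M)$-invariance of algebraic closure, making the rest of the argument purely a matter of unwinding $\rho$. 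Homogeneity of $M$ is not strictly required here, though it can be invoked to note that $\rho$ is surjective and thus $\mathbf{EA}(M)$ exhausts the pairs one might form.
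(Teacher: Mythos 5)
Your proof is correct and follows essentially the same route as the paper: the SSIP sandwich $G_{(A)} \leq H \leq G_{\{A\}}$, replacement of $A$ by its (finite) algebraic closure $K$, and then the correspondence under the restriction map $G_{\{K\}} \to Aut(K)$ with kernel $G_{(K)} \leq H$, yielding $H = G_{(K,L)}$ for $L = \{ f \restriction K : f \in H \}$. The only cosmetic difference is that the paper packages the last step as the isomorphism $G_{\{K\}}/G_{(K)} \cong Aut(K)$ (using homogeneity for surjectivity, which it needs again later) plus the fourth isomorphism theorem, whereas you work directly with $\rho^{-1}(\rho(H))$ and correctly observe that surjectivity is not needed for this lemma.
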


	\begin{proof} The containment from right to left is trivial. Let then $H \leq G$ with $ [G:H] < 2^\omega$. By the strong small index property, there is finite $K \subseteq M$ such that $G_{(K)} \leq H \leq G_{\{ K \}}$. It follows that $G_{(acl_M(K))} \leq H \leq G_{\{ acl_M(K) \}}$, and so without loss of generality we can assume that $K \in \mathbf{A}(M)$. First of all we claim that $G_{(K)} \trianglelefteq G_{\{ K \}}$. In fact, for $g \in G_{\{ K \}}$, $h \in G_{(K)}$ and $a \in K$, we have $ghg^{-1}(a) = gg^{-1}(a) = a$, since $g^{-1}(a) \in K$ and $h \in G_{(K)}$. Furthermore, for $g, h \in G_{\{ K \}}$, we have $g^{-1}h \in G_{(K)}$ iff $g \restriction K = h \restriction K$. Hence, the map $f: gG_{(K)} \mapsto g \restriction K$, for $g \in G_{\{ K \}}$, is such that: 
	\begin{equation}\tag{$\star$}\label{equation_label}
	f:G_{\{ K \}} / G_{(K)}  \cong  Aut(K),
\end{equation}
since every $f \in Aut(K)$ extends to an automorphism of $M$. Thus, by the fourth isomorphism theorem we have $H = G_{(K, L)}$ for $L = \{ f \restriction K : f \in H \}$.
\end{proof}

	\begin{proposition}\label{normality_prop} Let $H_1, H_2 \in \mathcal{SS}(M)$. The following conditions are equivalent:
	\begin{enumerate}[(1)]
	\item $H_1 \trianglelefteq H_2$ and $[H_2: H_1] < \omega$;
	\item there is $K \in \mathbf{A}(M)$ and $L_1 \trianglelefteq L_2 \leq Aut(K)$ such that $H_i = G_{(K, L_i)}$ for $i =1,2$.
\end{enumerate}
\end{proposition}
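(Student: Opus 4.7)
The plan is to prove both directions using Lemma~\ref{char_subgr_small_index} and the isomorphism $(\star)$ from its proof.

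For $(2) \Rightarrow (1)$, I would first observe that both $H_i$ satisfy $G_{(K)} \leq H_i \leq G_{\{K\}}$, since each $L_i$ contains $id_K$ and is contained in $Aut(K)$. Restricting $(\star)$ to the preimages under the restriction map $\pi: G_{\{K\}} \to Aut(K)$ gives $H_i/G_{(K)} \cong L_i$, and the fourth isomorphism theorem then yields both $H_1 \trianglelefteq H_2$ and $[H_2 : H_1] = [L_2 : L_1] \leq |Aut(K)| < \omega$, where finiteness of $Aut(K)$ comes from locally finite algebraicity applied to the finite set $K$.

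The direction $(1) \Rightarrow (2)$ is the substantive one. Writing $H_i = G_{(K_i, L_i')}$ via Lemma~\ref{char_subgr_small_index}, the task is to produce a single $K \in \mathbf{A}(M)$ witnessing both $H_1$ and $H_2$ simultaneously. The naive choice $K = acl_M(K_1 \cup K_2)$ fails because $H_2$ need not setwise preserve $K_1$, and this is the main obstacle of the proof. To bypass it, I would first enlarge $K_1$ to be $H_2$-invariant: since $H_1 \leq H_2 \cap G_{\{K_1\}}$, one has $[H_2 : H_2 \cap G_{\{K_1\}}] \leq [H_2 : H_1] < \omega$, so the $H_2$-orbit of $K_1$ under the setwise action is finite. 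Letting $K_1' := \bigcup_{h \in H_2} h(K_1)$, this is a finite $H_2$-invariant set containing $K_1$, and then $K := acl_M(K_1' \cup K_2)$ lies in $\mathbf{A}(M)$, contains $K_1 \cup K_2$, and is $H_2$-invariant because $acl_M$ commutes with automorphisms.

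With $K$ in hand, I would define $L_i := \{f \restriction K : f \in H_i\} \leq Aut(K)$ and verify that $H_i = G_{(K, L_i)}$. The inclusion $H_i \subseteq G_{(K, L_i)}$ is immediate from the definition of $L_i$. For the converse, the key observation is that $G_{(K)} \leq H_i$: if $f \restriction K = id_K$, then since $K_i \subseteq K$ we get $f \restriction K_i = id_{K_i} \in L_i'$, hence $f \in G_{(K_i, L_i')} = H_i$. So any $f$ with $f \restriction K \in L_i$ agrees on $K$ with some $g \in H_i$, whence $f g^{-1} \in G_{(K)} \leq H_i$ and $f \in H_i$. Finally, normality $L_1 \trianglelefteq L_2$ transfers from $H_1 \trianglelefteq H_2$ through $(\star)$, completing the argument.
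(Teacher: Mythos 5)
Your proposal is correct, but your handling of the substantive direction $(1)\Rightarrow(2)$ takes a genuinely different route from the paper's. The paper shows that no common enlargement is needed at all: writing $H_i = G_{(K_i,L_i)}$, it proves $K_2 \subseteq K_1$ (any $a \in K_2 \setminus K_1$ is non-algebraic over $K_1 = acl_M(K_1)$, so some $f \in G_{(K_1)} \leq H_1$ moves $a$ off the finite set $K_2$, contradicting $H_1 \leq H_2$) and $K_1 \subseteq K_2$ (otherwise infinitely many $f_n \in G_{(K_2)} \leq H_2$ sending $K_1\setminus K_2$ to pairwise disjoint sets produce infinitely many cosets of $H_1$ in $H_2$), so in fact $K_1 = K_2$, after which $L_1 \trianglelefteq L_2$ follows by extending automorphisms of $K$ to $M$ via homogeneity. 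You instead sidestep the question of whether $K_1 = K_2$: you saturate $K_1$ under the $H_2$-action, with finiteness of the orbit coming from orbit--stabilizer together with $H_1 \leq H_2 \cap G_{\{K_1\}}$ and $[H_2:H_1]<\omega$, set $K = acl_M(K_1' \cup K_2)$, and identify $H_i$ as the full preimage of $L_i = \{f \restriction K : f \in H_i\}$ under restriction $G_{\{K\}} \to Aut(K)$, the crucial checks being that $K$ is $H_2$-invariant and that $G_{(K)} \leq H_i$ (because $K_i \subseteq K$); normality then transfers through the correspondence given by $(\star)$. All of these checks go through, so your argument is complete. What each approach buys: the paper's argument yields the sharper fact that the two canonical supports coincide ($K_1 = K_2$ --- in hindsight your enlargement is vacuous), which is the form in which the proposition is quoted in the proof of Proposition \ref{char_point_stab}, though the existence statement you prove suffices there as well; your argument is softer, replacing the two ``moving'' constructions by orbit--stabilizer and the correspondence theorem, and uses homogeneity only where the paper does too, namely to realize elements of $Aut(K)$ as restrictions of automorphisms of $M$. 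Your $(2)\Rightarrow(1)$ is essentially the paper's, with the finite index made explicit via $[L_2:L_1] \leq |Aut(K)| < \omega$.
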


	\begin{proof} The proof of (2) implies (1) is immediate, since by the normality of $L_1$ in $L_2$ we have that, for $g \in G_{(K, L_2)}$ and $h \in G_{(K, L_1)}$, $g h g^{-1} \restriction K \in L_1$, while the fact that $[H_2: H_1] < \omega$ follows from the proof of Lemma \ref{char_subgr_small_index}. We show that (1) implies (2). By assumption, $H_i = G_{(K_i, L_i)}$ for $(K_i, L_i) \in \mathbf{EA}(M)$ ($i =1,2$). 
	\begin{enumerate}[$(*)_1$]
	\item $K_2 \subseteq K_1$.
\end{enumerate}
Suppose not, and let $a \in K_2 - K_1$ witness this. Then we can find $f \in G$ such that $f \restriction K_1 = id_{K_1}$ and $f(a) \not\in K_2$. It follows that $f \in H_1 - H_2$, a contradiction.
\begin{enumerate}[$(*)_2$]
	\item $K_1 \subseteq K_2$.
\end{enumerate}
Suppose not, and let $f_n \in G$, for $n < \omega$, such that $f_n \restriction K_2 = id_{K_2}$, and in addition $\{ f_n(K_1 - K_2) : n < \omega \}$ are pairwise disjoint. Then clearly, for every $n < \omega$, $f_n \in H_2$ and $\{ f_nH_1 : n < \omega \}$ are distinct, contradicting the assumption $[H_2: H_1] < \omega$.

\begin{enumerate}[$(*)_3$]
	\item $L_1 \leq L_2$.
\end{enumerate}
Suppose not, and let $h \in L_1 - L_2$. Then $h$ extends to an automorphism $f$ of $M$. Clearly $f \in H_1 - H_2$, a contradiction.

\begin{enumerate}[$(*)_4$]
	\item $L_1 \trianglelefteq L_2$.
\end{enumerate}
Suppose not, and let $g_i \in L_i$ ($i = 1, 2$) be such that $g_2g_1g_2^{-1} \not\in L_1$. Then $g_i$ extends to an automorphism $f_i$ of $M$ ($i = 1, 2$). Clearly $f_i \in H_i$ ($i = 1, 2$), and $f_2f_1f_2^{-1} \not\in H_1$, a contradiction.
\end{proof}

	\begin{proposition}\label{char_point_stab} Let $\mathcal{G} = \{ H \in \mathcal{SS}(M) : \text{ there is no } H' \in \mathcal{SS}(M), \text{with } H' \subsetneq H, H' \trianglelefteq H \text{ and } [H : H'] < \omega \}$. Then $\mathcal{PS}(M) = \mathcal{G}$.
\end{proposition}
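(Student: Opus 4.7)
The plan is to prove the two inclusions $\mathcal{G} \subseteq \mathcal{PS}(M)$ and $\mathcal{PS}(M) \subseteq \mathcal{G}$ separately, the second being the substantive one. For $\mathcal{G} \subseteq \mathcal{PS}(M)$, I start from $H \in \mathcal{G}$ and use Lemma \ref{char_subgr_small_index} to write $H = G_{(K, L)}$ for some $(K, L) \in \mathbf{EA}(M)$. The calculation in the proof of Lemma \ref{char_subgr_small_index} (in particular the isomorphism \eqref{equation_label}) identifies $G_{(K)} = G_{(K, \{id_K\})}$ as a normal subgroup of $H$ of index $|L| < \omega$ (since $K$ is finite), and $G_{(K)}$ visibly lies in $\mathcal{SS}(M)$. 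The hypothesis $H \in \mathcal{G}$ then forces $G_{(K)} = H$, i.e.\ $L = \{id_K\}$, so $H = G_{(K)} \in \mathcal{PS}(M)$.

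For $\mathcal{PS}(M) \subseteq \mathcal{G}$, fix $H = G_{(K)}$ with $K \in \mathbf{A}(M)$, and suppose toward a contradiction that there exists $H' \in \mathcal{SS}(M)$ with $H' \subsetneq H$, $H' \trianglelefteq H$ and $[H : H'] < \omega$. Applying Proposition \ref{normality_prop} to $(H_1, H_2) = (H', H)$ produces $K^* \in \mathbf{A}(M)$ and $L_1 \trianglelefteq L_2 \leq Aut(K^*)$ such that $H = G_{(K^*, L_2)}$ and $H' = G_{(K^*, L_1)}$. The crux of the argument is the uniqueness claim $K = K^*$: once this is established, the representation $G_{(K)} = G_{(K, L_2)}$ forces $L_2 = \{id_K\}$ (as $L_2$ equals the set of restrictions to $K$ of elements of $G_{(K)}$), hence $L_1 = \{id_K\}$ too, so $H' = G_{(K, \{id_K\})} = H$, contradicting $H' \subsetneq H$.

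To establish $K = K^*$, I argue two inclusions using only that elements of $\mathbf{A}(M)$ are finite and $acl$-closed. For $K \subseteq K^*$: from $G_{(K^*)} \leq G_{(K^*, L_2)} = G_{(K)}$, every element of $K$ is fixed pointwise by each automorphism fixing $K^*$ pointwise, so $K \subseteq dcl_M(K^*) \subseteq acl_M(K^*) = K^*$. For $K^* \subseteq K$: since $H = G_{(K^*, L_2)}$ is contained in the setwise stabilizer $G_{\{K^*\}}$, any putative $a \in K^* \setminus K$ would satisfy $a \notin acl_M(K) = K$, so its $G_{(K)}$-orbit would be infinite while remaining inside the finite set $K^*$, a contradiction. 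The main obstacle is precisely this second inclusion: it is where the hypothesis of locally finite algebraicity, combined with the algebraic closedness of $K$ inherited from $K \in \mathbf{A}(M)$, is used essentially, ruling out spurious enlargements of the distinguished set in the representation $G_{(K^*, L_2)}$ of $H$ and thereby pinning down $K$ uniquely.
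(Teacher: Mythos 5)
Your proof is correct and follows essentially the same route as the paper: both containments rest on Proposition \ref{normality_prop} together with the observation that $G_{(K)} = G_{(K, \{ id_K \})}$ is a normal subgroup of finite index of any $G_{(K, L)}$, and the endgame ($L_2 = \{ id_K \}$, hence $L_1 = L_2$ and $H' = H$) is the same. The only divergence is that you explicitly verify the identification $K = K^*$ via the $dcl_M$/finite-orbit argument, a point the paper leaves implicit because the proof of Proposition \ref{normality_prop} already shows that the algebraically closed sets in whatever representations one fixes for $H'$ and $H$ must coincide, so one may start from the representation $H = G_{(K, \{ id_K \})}$.
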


	\begin{proof} First we show the containment from left to right. Let $H_2 \in \mathcal{PS}(M)$ and assume that there exists $H_1 \in \mathcal{SS}(M)$ such that $H_1 \subsetneq H_2, H_1 \trianglelefteq H_2 \text{ and } [H_2: H_1] < \omega$. By Proposition \ref{normality_prop}, $H_i = G_{(K_i, L_i)}$ for $(K_i, L_i) \in \mathbf{EA}(M)$ ($i =1,2$) and $K_1 = K = K_2$. Now, as $H_2 \in \mathcal{PS}(M)$, $L_2 = \{ id_K \}$. Hence, $L_1 = L_2$, and so $H_1 = H_2$, a contradiction. We now show the containment from right to left. Let $H \in \mathcal{G}$, then $H = G_{(K, L)}$ for $(K, L) \in \mathbf{EA}(M)$. If $L \neq \{ id_K \}$ then letting $H' = G_{(K, \{ id_K \})}$ we have $H' \subsetneq H$, $H' \trianglelefteq H$ and $[H : H'] < \omega$, a contradiction.
\end{proof}

	Let $\mathbf{L}(M)$ be a set of finite groups such that for every $K \in \mathbf{A}(M)$ there is a unique $L \in \mathbf{L}(M)$ such that $L \cong Aut(K)$.%, and for $L_1, L_2 \in \mathbf{L}(M)$ is $L_1 \neq L_2$ then $L_1 \cap L_2 = \emptyset$.

\begin{proposition}\label{char_Lpoint_stab} Let $L \in \mathbf{L}(M)$ and $H \in \mathcal{SS}(M)$. The following conditions are equivalent:
	\begin{enumerate}[(1)]
	\item $H = G_{(K)} \in \mathcal{PS}(M)$ and $Aut(K) \cong L$;
	\item there is $H' \in \mathcal{SS}(M)$ such that $H \trianglelefteq H'$, $[H': H] < \omega$, $H'$ is maximal under these conditions and $H'/H \cong L$.
\end{enumerate}
\end{proposition}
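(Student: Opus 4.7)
The plan for (1)$\Rightarrow$(2) is direct. Take $H' := G_{\{K\}} = G_{(K, Aut(K))}$, so $H' \in \mathcal{SS}(M)$. Equation $(\star)$ in the proof of Lemma \ref{char_subgr_small_index} yields both $H \trianglelefteq H'$ and $H'/H \cong Aut(K) \cong L$, so in particular $[H':H] = |L| < \omega$. For the maximality of $H'$, one computes $N_G(H) = G_{\{K\}}$: conjugation gives $g G_{(K)} g^{-1} = G_{(g(K))}$, and since $K \in \mathbf{A}(M)$ is recoverable from $G_{(K)}$ as the set of elements of $M$ with finite $G_{(K)}$-orbit, the equality $G_{(g(K))} = G_{(K)}$ forces $g(K) = K$. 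Hence any $H''$ with $H \trianglelefteq H''$ lies in $N_G(H) = H'$, giving maximality.

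For (2)$\Rightarrow$(1), I apply Proposition \ref{normality_prop} to the pair $H \trianglelefteq H'$, producing $K^* \in \mathbf{A}(M)$ and $L_1 \trianglelefteq L_2 \leq Aut(K^*)$ with $H = G_{(K^*, L_1)}$ and $H' = G_{(K^*, L_2)}$. The goal is to show $L_1 = \{id_{K^*}\}$ and $L_2 = Aut(K^*)$: once established, $H = G_{(K^*)} \in \mathcal{PS}(M)$ with $K := K^*$, and $H'/H \cong Aut(K^*) \cong L$ by equation $(\star)$, which is (1). For $L_2 = Aut(K^*)$: whenever $L_1 \trianglelefteq Aut(K^*)$, the setwise stabilizer $G_{\{K^*\}} = G_{(K^*, Aut(K^*))}$ lies in $\mathcal{SS}(M)$, normalizes $H$, and has finite index over $H$, so the maximality of $H'$ forces $G_{\{K^*\}} \leq H'$, whence $L_2 = Aut(K^*)$. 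For $L_1 = \{id_{K^*}\}$: I argue by contradiction. If $L_1 \neq \{id_{K^*}\}$, then $G_{(K^*)} \subsetneq H$ witnesses, via Proposition \ref{char_point_stab}, that $H \notin \mathcal{PS}(M)$; the plan is then to produce $H'' \in \mathcal{SS}(M)$ with $H'' \supsetneq H'$, $H \trianglelefteq H''$, and $[H'':H] < \omega$, contradicting the maximality of $H'$.

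The hard part will be producing this $H'' \supsetneq H'$ in the case $L_1 \neq \{id_{K^*}\}$, since a naive enlargement of $K^*$ to some $\tilde{K} \in \mathbf{A}(M)$ need not preserve the representation $H = G_{(K^*, L_1)}$ (the group $H$ typically fails to setwise-fix $\tilde{K}$). This step is where the combined force of Hypothesis \ref{hyp} — homogeneity, SSIP and locally finite algebraicity — must be invoked, most likely via a careful choice of $\tilde{K}$ adapted to the orbit structure of $H$ on $M$, together with an iterated application of Proposition \ref{normality_prop} to lift the failure of $H \in \mathcal{PS}(M)$ into the requisite extension of $H'$.
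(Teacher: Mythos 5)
Your direction (1)$\Rightarrow$(2) is correct and in fact slightly more self-contained than the paper's: where the paper cites Proposition \ref{normality_prop} and equation $(\star)$, you get maximality by computing $N_G(G_{(K)}) = G_{\{K\}}$ (using that $K$ is recovered from $G_{(K)}$ as the set of elements with finite $G_{(K)}$-orbit, which is legitimate since $K \in \mathbf{A}(M)$ is algebraically closed and algebraicity is locally finite). The problem is the direction (2)$\Rightarrow$(1), where your argument stops at a declared ``plan'': you reduce everything to showing $L_1 = \{ id_{K^*} \}$, and for that you only say you intend to produce some $H'' \in \mathcal{SS}(M)$ with $H'' \supsetneq H'$, $H \trianglelefteq H''$, $[H'':H] < \omega$. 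This step is not carried out, and it cannot be: by Proposition \ref{normality_prop} together with the injectivity of $(K, L) \mapsto G_{(K,L)}$ (proved as $(*)_1$ in Theorem \ref{exp_auto_th}), every $H'' \in \mathcal{SS}(M)$ with $H \trianglelefteq H''$ and $[H'':H] < \omega$ is of the form $G_{(K^*, L')}$ with $L_1 \trianglelefteq L' \leq Aut(K^*)$; all such $L'$ lie inside $N_{Aut(K^*)}(L_1)$, so if $H' = G_{(K^*, N_{Aut(K^*)}(L_1))}$ there simply is no larger competitor, and no amount of appeal to homogeneity, SSIP or locally finite algebraicity will manufacture one.

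Indeed the literal implication (2)$\Rightarrow$(1) is false, so no completion of your plan exists. Take $M$ the random graph, $K^*$ a two-element (automatically algebraically closed) set, and $L_1 = L_2 = Aut(K^*) \cong \mathbb{Z}/2\mathbb{Z}$, so $H = H' = G_{\{K^*\}}$. By the analysis above $H'$ is maximal, $[H':H] = 1$, and $H'/H$ is trivial, hence isomorphic to the $L \in \mathbf{L}(M)$ realized by a singleton; yet $H \notin \mathcal{PS}(M)$. (The paper's own one-line proof of (2)$\Rightarrow$(1) glosses over exactly this point: Proposition \ref{normality_prop} and $(\star)$ alone do not force $L_1$ to be trivial.) The repair is not a new construction but a strengthening of (2): add the hypothesis $H \in \mathcal{PS}(M)$, which is harmless for the only place the proposition is used, namely $(*)_4$ of Theorem \ref{exp_auto_th}, since $(*)_3$ (via Proposition \ref{char_point_stab}) already guarantees that $\mathcal{PS}(M)$ is preserved by every $F \in Aut(G)$. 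With $H = G_{(K^*)}$, i.e. $L_1 = \{ id_{K^*} \}$, your own conditional argument (which correctly notes it needs $L_1 \trianglelefteq Aut(K^*)$) gives $L_2 = Aut(K^*)$ by maximality, and then $(\star)$ yields $Aut(K^*) \cong H'/H \cong L$, completing the proof.
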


	\begin{proof} Concerning the implication ``(1) implies (2)'', let $H' = G_{\{ K \}}$, then, by Proposition \ref{normality_prop} and equation (\ref{equation_label}) in the proof of Lemma \ref{char_subgr_small_index}, we have that $H'$ is as wanted. Concerning the implication ``(2) implies (1)'', if $H$ and $H'$ are as in (2), then, by Proposition \ref{normality_prop} and equation (\ref{equation_label}) in the proof of Lemma \ref{char_subgr_small_index}, it must be the case that $H' = G_{\{ K \}}$ and $H = G_{(K)}$ for some $K \in \mathbf{A}(M)$ such that $Aut(K) \cong L$.
\end{proof}

	\begin{definition}\label{def_expanded} We define the structure $ExAut(M)$, the {\em expanded group of automorphisms of $M$}, as follows:
	\begin{enumerate}[(1)]
	\item $ExAut(M)$ is a two-sorted structure;
	\item the first sort has set of elements $Aut(M) = G$;
	\item the second sort has set of elements $\mathbf{EA}(M)$;
	\item we identify $\{ (K, \{ id_K \}) : K \in \mathbf{A}(M) \}$ with $\mathbf{A}(M)$;
	\item the relations are:
	\begin{enumerate}
	\item $P_{\mathbf{A}(M)} = \{ K \in \mathbf{A}(M)\}$ (recalling the above identification);
	%\item $\leq \; = \{ ((K_1, L_1), (K_2, L_2)) : (K_i, L_i) \in \mathbf{EA}(M), \text{ and }K_2 < K_1 \text{ or } K_2 = K_1 \text{ and } L_1 \leq L_2 \}$;
	\item for $L \in \mathbf{L}(M)$, $P_{L(M)} = \{ K \in \mathbf{A}(M) : Aut(K) \cong L \}$;
	%\item $P_{\mathbf{L}(M)} = \bigcup_{L \in \mathbf{L}(M)} P_{L(M)}$;
	\item $\leq_{\mathbf{EA}(M)} \; = \{ ((K_1, L_1), (K_2, L_2)): (K_i, L_i) \in \mathbf{EA}(M) \; (i =1,2)\text{, } K_1 \leq K_2 \text{ and } L_2 \restriction K_1 \leq L_1 \}$;
	\item $\leq_{\mathbf{A}(M)} \; = \{ (K_1, K_2): K_i \in \mathbf{A}(M) \; (i =1,2)\text{ and } K_1 \leq K_2 \}$;
	\item $P^{min}_{\mathbf{A}(M)} = \{ K \in \mathbf{A}(M): acl(\emptyset) \neq K \in \mathbf{A}(M) \text{ is minimal in } (\mathbf{A}(M), \subseteq)\}$;
	\end{enumerate}
	\item the operations are:
	\begin{enumerate}[(f)]
	\item composition on $Aut(M)$;
	\end{enumerate}
	\begin{enumerate}[(g)]
	\item for $f \in Aut(M)$ and $K \in \mathbf{A}(M)$, $Op(f, K) = f(K)$;
	\end{enumerate}
	\begin{enumerate}[(h)]
	\item for $f \in Aut(M)$ and $(K_1, L_1) \in \mathbf{EA}(M)$, $Op(f, (K_1, L_1)) = (K_2, L_2)$ iff $f(K_1) = K_2$ and $L_2 = \{ f \restriction K_1 \pi f^{-1} \restriction K_2 : \pi \in L_1\}$.
	\end{enumerate}
	\end{enumerate}
\end{definition}

%	With the sole exception of Corollary \ref{reconstr_bi-def}, in the present paper the model-theoretic notions of definability and algebraicity are to be understood in terms of invariance of the action of the automorphism group (cf. e.g. Definition \ref{algebraicity}), this justifies the following choice of terminology:
	
	\begin{definition}\label{orbit_def} We say that a set of subsets of a structure $N$ is second-order orbit-definable if it is preserved by automorphisms of $N$. We say that a structure $M$ is second-order orbit-definable in a structure $N$ if there is a injective map $\mathbf{j}$ mapping $\emptyset$-definable subsets of $M$ to second-order orbit-definable set of subsets of $N$.
\end{definition}

	\begin{theorem}\label{exp_auto_th} Let $M$ and $N$ be as in Hypothesis \ref{hyp}, and let $G = Aut(M)$. Then:
	\begin{enumerate}[(1)]
	\item The map $\mathbf{j}_M = \mathbf{j} : (f, (K, L)) \mapsto (\{ f \}, G_{(K, L)})$ witnesses second-order orbit-definability of $ExAut(M)$ in $Aut(M)$.
	\item Every $F: Aut(M) \cong Aut(N)$ has an extension $\hat{F}: ExAut(M) \cong ExAut(N)$.
	\end{enumerate}
\end{theorem}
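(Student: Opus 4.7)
I translate each basic relation and operation of $ExAut(M)$ into a purely group-theoretic condition on subgroups or subsets of $G = Aut(M)$, so that its image under $\mathbf{j}$ is automatically preserved by every $\alpha \in Aut(G)$. The backbone is Lemma \ref{char_subgr_small_index}, which identifies the image of the second sort under $\mathbf{j}$ with $\mathcal{SS}(M) = \{H \leq G : [G:H] < 2^{\aleph_0}\}$, a class defined purely in abstract group-theoretic terms. The first sort is mapped to the class of singletons of $G$, and composition on the first sort is preserved by any group isomorphism.

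\textbf{Translating the primitives.} I then handle the primitives one by one. The subsort $\mathbf{A}(M)$ (hence the predicate $P_{\mathbf{A}(M)}$) corresponds under $\mathbf{j}$ to $\mathcal{PS}(M)$, which Proposition \ref{char_point_stab} characterizes inside $\mathcal{SS}(M)$ as those subgroups admitting no proper finite-index normal subgroup lying in $\mathcal{SS}(M)$. The labels $P_{L(M)}$, for $L \in \mathbf{L}(M)$, are captured by Proposition \ref{char_Lpoint_stab} via the existence of a maximal $H' \in \mathcal{SS}(M)$ with $H \trianglelefteq H'$, $[H':H] < \omega$, and $H'/H \cong L$. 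The orderings $\leq_{\mathbf{EA}(M)}$ and $\leq_{\mathbf{A}(M)}$ correspond, by direct inspection of the definition of $G_{(K, L)}$, to reverse set-inclusion of the associated subgroups. The image under $\mathbf{j}$ of $P^{min}_{\mathbf{A}(M)}$ consists of those $H \in \mathcal{PS}(M)$ that are maximal proper subgroups of $G_{(acl_M(\emptyset))}$ within $\mathcal{PS}(M)$, where $G_{(acl_M(\emptyset))}$ is itself the unique maximum of $\mathcal{PS}(M)$ under set inclusion. Finally, the action operations translate to conjugation: $G_{(f(K))} = f G_{(K)} f^{-1}$, and more generally $f G_{(K_1, L_1)} f^{-1} = G_{(f(K_1), L_2)}$ with $L_2 = (f \restriction K_1)\, L_1\, (f \restriction K_1)^{-1}$, matching exactly the recipe in Definition \ref{def_expanded}(h).

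\textbf{Plan for (2), and main subtlety.} Given an abstract group isomorphism $F : G \cong Aut(N)$, I set $\hat F = F$ on the first sort, and on the second sort $\hat F(K, L) = (\mathbf{j}_N)^{-1}(F(G_{(K, L)}))$, i.e.\ the unique $(K', L') \in \mathbf{EA}(N)$ whose stabilizer equals $F(G_{(K, L)})$. Well-definedness follows from Lemma \ref{char_subgr_small_index} applied on both sides, so that $F$ maps $\mathcal{SS}(M)$ bijectively onto $\mathcal{SS}(N)$, together with the injectivity of $(K', L') \mapsto G_{(K', L')}$. Each relation and operation of $ExAut$ is then preserved by $\hat F$ thanks to the group-theoretic characterizations established in (1). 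The main subtlety is the faithful transfer of the labels $P_{L(M)}$ to $P_{L(N)}$: this relies on identifying $\mathbf{L}(M)$ and $\mathbf{L}(N)$ canonically (both are chosen as sets of representatives of finite groups up to isomorphism) and on $F$ preserving the quotient $H'/H$ as an abstract group, which is automatic since $F$ is a group isomorphism. Once this and the conjugation descriptions of the operations are verified, $\hat F$ is a well-defined isomorphism of the two-sorted structures.
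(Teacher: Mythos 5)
Your proposal is correct and follows essentially the same route as the paper: it reduces each primitive of $ExAut(M)$ to the group-theoretic characterizations of Lemma \ref{char_subgr_small_index} and Propositions \ref{char_point_stab} and \ref{char_Lpoint_stab}, treats the orderings and $P^{min}_{\mathbf{A}(M)}$ via the (automorphism-invariant) inclusion poset of subgroups, renders the operations as conjugation, and defines $\hat F$ exactly as $\mathbf{j}_N^{-1} F\, \mathbf{j}_M$. The only point treated more lightly than in the paper is the injectivity of $(K,L) \mapsto G_{(K,L)}$ (the paper's $(*)_1$, which uses homogeneity and algebraic closedness), but you invoke it correctly and it is a short verification.
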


	\begin{proof} We prove (1).
	
	\begin{enumerate}[$(*)_1$]
	\item The map $(f, (K, L)) \mapsto (\{ f \}, G_{(K, L)})$ is one-to-one.
\end{enumerate}
Suppose that $(K_1, L_1) \neq (K_2, L_2) \in \mathbf{EA}(M)$, we want to show that $G_{(K_1, L_1)} \neq G_{(K_2, L_2)}$. Suppose that $K_1 \neq K_2$. By symmetry, we can assume that $K_1 \not\subseteq K_2$. Then there is $f \in G$ such that $f \restriction K_2 = id_{K_2}$ and $f(K_1) \neq K_1$, since $K_2$ is algebraically closed. Thus, $f \in G_{(K_2, L_2)} - G_{(K_1, L_1)}$. Suppose now that $K_1 = K = K_2$ and $L_1 \neq L_2$. By symmetry, we can assume that $L_1 \not\subseteq L_2$. Let $g \in L_1 - L_2$, then $g$ extends to an automorphism $f$ of $M$. Thus, $f \in G_{(K, L_1)} - G_{(K, L_2)}$. Finally, notice that it is not possible that $\{ f \} = G_{(K, L)}$, and so we are done.

	\begin{enumerate}[$(*)_2$]
	\item The range $\mathbf{j}(\mathbf{EA}(M)) = \mathcal{SS}(M)$ is mapped onto itself by any $F \in Aut(G)$.
\end{enumerate}
By Lemma \ref{char_subgr_small_index}.

	\begin{enumerate}[$(*)_3$]
	\item The range $\mathbf{j}(P_{\mathbf{A}(M)})) = \mathcal{PS}(M)$ is mapped onto itself by any $F \in Aut(G)$.
\end{enumerate}
By Proposition \ref{char_point_stab}.

	\begin{enumerate}[$(*)_4$]
	\item For $L \in \mathbf{L}(M)$, the range $\mathbf{j}(P_{L(M)}) = \{ G_{(K)} : Aut(K) \cong L \}$ is mapped onto itself by any $F \in Aut(G)$.
\end{enumerate}
By Proposition \ref{char_Lpoint_stab}.

%	\begin{enumerate}[$(*)_5$]
%	\item The range $\mathbf{j}(P_\mathbf{L}(M)) = \bigcup_{L \in \mathbf{L}(M)} \{ G_{(K)} : Aut(K) \cong L \}$ is mapped onto itself by any $F \in Aut(G)$.
%\end{enumerate}
%Follows from $(*)_4$.

\begin{enumerate}[$(*)_5$]
	\item The range $\mathbf{j}(\leq_{\mathbf{EA}(M)}) = \{ (G_{(K_1, L_1)}, G_{(K_2, L_2)}) : G_{(K_1, L_1)} \supseteq G_{(K_2, L_2)}, (K_i, L_i) \in \mathbf{EA}(M) \text{ } (i=1, 2)\text{, } K_1 \leq K_2 \text{ and } L_2 \restriction K_1 \leq L_1 \}$ is preserved by any $F \in Aut(G)$.
\end{enumerate}
For $(K_i, L_i) \in \mathbf{EA}(M)$ ($i =1,2$) and $F \in Aut(G)$, obviously we have $\mathbf{j}(K_1, L_1) \supseteq \mathbf{j}(K_2, L_2)$ if and only if $F(\mathbf{j}(K_1, L_1)) \supseteq F(\mathbf{j}(K_2, L_2))$, since $F$ induces an automorphism of $(\mathcal{P}(Aut(G)), \subseteq)$.

\begin{enumerate}[$(*)_6$]
	\item The range $\mathbf{j}(\leq_{\mathbf{A}(M)}) = \{ (G_{(K_1)},  G_{(K_2)}) : G_{(K_1)} \supseteq G_{(K_2)},  K_1, K_2 \in \mathbf{A}(M), K_1 \leq K_2 \}$ is preserved by any $F \in Aut(G)$.
\end{enumerate}
As in $(*)_5$, i.e. any $F \in Aut(G)$ induces an automorphism of $(\mathcal{P}(Aut(G)), \subseteq)$.

\begin{enumerate}[$(*)_7$]
	\item The range $\mathbf{j}(P^{min}_{\mathbf{A}(M)}) = \{ H \in \mathcal{PS}(M) : G \neq H \text{ is maximal in } (\mathcal{PS}(M), \subseteq) \}$ is preserved by any $F \in Aut(G)$.
\end{enumerate}
As in $(*)_5$, i.e. any $F \in Aut(G)$ induces an automorphism of $(\mathcal{P}(Aut(G)), \subseteq)$.

\begin{enumerate}[$(*)_8$]
	\item For any $F \in Aut(G)$, $F(\{ gh \}) = F(\{g\})F(\{h\})$.
\end{enumerate}
Obvious.

\begin{enumerate}[$(*)_{9}$]
	\item $\mathbf{j}(Op(f, K)) = \mathbf{j}(f(K)) = G_{(f(K))} = fG_{(K)}f^{-1}$ and: $$F(\mathbf{j}((Op(f, K)))) = Op(F(f), F(\mathbf{j}(K))),$$ for any $F \in Aut(G)$.
\end{enumerate}
Observe that:
\[ \begin{array}{rcl}
	F(\mathbf{j}((Op(f, K)))) & = & F(fG_{(K)}f^{-1}) \\
					  	       & = & F(f)F(G_{(K)})(F(f))^{-1}\\
					           & = & F(f)(F(\mathbf{j}(K)) \\
					           & = & Op(F(f), F(\mathbf{j}(K))), 
\end{array}	\]
since by $(*)_3$ $\mathcal{PS}(M)$ is mapped onto itself by any $F \in Aut(G)$.

\begin{enumerate}[$(*)_{10}$]
	\item $\mathbf{j}(Op(f, (K_1, L_1))) = (fG_{(K_1)}f^{-1}, fG_{(K_1, L_1)}f^{-1})$ and: $$F(\mathbf{j}((Op(f, (K_1, L_1))))) = Op(F(f), F(\mathbf{j}((K_1, L_1)))),$$ for any $F \in Aut(G)$.
\end{enumerate}
%Observe that:
%\[ \begin{array}{rcl}
%	F(\mathbf{j}((Op(f, (K_1, L_1))))) & = & (F(fG_{(K_1)}f^{-1}), F(fG_{(K_1, L_1)}f^{-1})) \\
%					  	       & = & (F(f)F(G_{(K)})(F(f))^{-1}, F(f)F(G_{(K_1, L_1)})(F(f))^{-1})\\
%					           & = & (F(f)(F(\mathbf{j}(K_1)), F(f)(F(\mathbf{j}(K_1))   ) \\
%					           & = & Op(F(f), (F(\mathbf{j}((K_1, L_1))))), 
%\end{array}	\]
%since $\mathcal{SS}(M)$ is mapped onto itself by any $F \in Aut(G)$.
Similar to $(*)_{9}$.

\smallskip
%\{ f \restriction K_1 \pi f^{-1} \restriction K_2 : \pi \in L_1\}

\noindent	This concludes the proof of (1). Finally, (2) follows directly from (1), in fact for $F: Aut(M) \cong Aut(N)$ letting $\hat{F} = \mathbf{j}_N^{-1}F\,\mathbf{j}_M$ we have $\hat{F}: ExAut(M) \cong ExAut(N)$.

\end{proof}

% Suppose in addition that if $F \in ExAut(M)$ is such that $F \restriction Aut(M) = id_{Aut(M)}$, and for every $f \in Aut(M)$ and $K \in \mathbf{A}(M)$, $f Aut(M)_{F(K)}f^{-1} = Aut(M)_{F(f(K))}$, then $F \restriction \mathbf{A}(M) = id_{\mathbf{A}(M)}$. Then:
%	%If center of $\{ F \restriction \mathbf{A}(M): Aut(ExAut(M)) and F \restriction Aut(M) \in Inn(Aut(M))\}$ in $Aut(ExAut(M)) \restriction \mathbf{A}(M)$ is trivial, then:
%	\begin{enumerate}[(1)]\setcounter{enumi}{2}
%	\item Every $F \in Aut(G)$ has a unique extension $\hat{F} \in Aut(ExAut(M))$.
%	\item Every $F' \in Aut(ExAut(M))$ has the form $\hat{F}$ for $F \in Aut(G)$.
%	\end{enumerate}
%	
%	\smallskip
%
%\noindent	We prove (3). Let $F \in Aut(Aut(M))$ and suppose there exist $\hat{F}, \tilde{F} \in Aut(ExAut(M))$ extending $F$ such that $\hat{F} \neq \tilde{F}$. Let $\hat{F}^{-1} \tilde{F} = F^*$, then $F^* \restriction Aut(M)$ is the identity on $Aut(M)$. Suppose that $F^* \restriction \mathbf{EA}(M)$ is not the identity on $\mathbf{EA}(M)$.

%\noindent Obviously, (4) follows from (3), and so this conclude the proofs.

\section{Reconstruction and Outer Automorphisms}

	In this section we prove the theorems stated in the introduction.

\smallskip	
	
	\begin{definition}\label{minimal} Let $\mathbf{K}_*$ be the class of countable structures $M$ satisfying:
	\begin{enumerate}[(1)]
	\item $M$ has the strong small index property;
	\item for every finite $A \subseteq M$, $acl_M(A)$ is finite;
	\item for every $a \in M$, $acl_M(\{ a \}) = \{ a \}$;
\end{enumerate}
\end{definition}
	 
	 As in the previous section, we let $G = Aut(M)$. We denote $G_{(\{ a \})}$ simply as $G_{(a)}$. The crucial point in asking this additional condition is the following:
	 
	 \begin{proposition}\label{prop_for_recons} Let $M \in \mathbf{K}_*$ be homogeneous, and define:
$$\mathcal{M} = \{ G_{(a)} : a \in M \}.$$
	 %\item $\mathcal{G} = \{ G_{(K)} \in \mathcal{PS}(M) : G \neq G_{(K)} \text{ is maximal, } Aut(K) = \{ id_K \} \}$;
	 %\item $\mathcal{G} = \{ G_{(K)} \in \mathcal{PS}(M) : acl(\emptyset) \neq K \in \mathbf{A}(M) \text{ is minimal in } (\mathbf{A}(M), \subseteq) \}$.
Then $\mathbf{j}_M(P^{min}_{\mathbf{A}(M)}) = \mathcal{M}$ (recall Definition \ref{def_expanded} and Theorem \ref{exp_auto_th}).
\end{proposition}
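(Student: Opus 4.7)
The plan is to prove the two inclusions by exploiting condition (3) in the definition of $\mathbf{K}_*$, which essentially forces singletons to be algebraically closed sets, together with the basic monotonicity of algebraic closure. Recall that under the identification in Definition \ref{def_expanded}(4) we have $\mathbf{j}_M(K) = G_{(K, \{id_K\})} = G_{(K)}$ for $K \in \mathbf{A}(M)$, so the claim unwinds to $\{G_{(K)} : K \in P^{min}_{\mathbf{A}(M)}\} = \{G_{(a)} : a \in M\}$.

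A first preparatory step is to observe that $acl_M(\emptyset) = \emptyset$ (ruling out the trivial case $|M| \leq 1$). Indeed, by monotonicity of algebraic closure, for any two distinct $a, b \in M$ one has
\[
acl_M(\emptyset) \subseteq acl_M(\{a\}) \cap acl_M(\{b\}) = \{a\} \cap \{b\} = \emptyset,
\]
where the middle equality uses condition (3). Thus $\emptyset \in \mathbf{A}(M)$ and it is the minimum of $(\mathbf{A}(M), \subseteq)$.

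For the inclusion $\mathcal{M} \subseteq \mathbf{j}_M(P^{min}_{\mathbf{A}(M)})$: take $a \in M$; by condition (3), $\{a\} = acl_M(\{a\}) \in \mathbf{A}(M)$. This $\{a\}$ is distinct from $acl_M(\emptyset) = \emptyset$, and since its only proper subset is $\emptyset$, it is minimal in $\mathbf{A}(M)$ above $acl_M(\emptyset)$, hence $\{a\} \in P^{min}_{\mathbf{A}(M)}$. Therefore $G_{(a)} = G_{(\{a\})} = \mathbf{j}_M(\{a\})$ lies in $\mathbf{j}_M(P^{min}_{\mathbf{A}(M)})$. For the reverse inclusion, let $K \in P^{min}_{\mathbf{A}(M)}$. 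Since $K \neq acl_M(\emptyset) = \emptyset$, pick any $a \in K$; then $\{a\} = acl_M(\{a\}) \in \mathbf{A}(M)$ is a non-empty member of $\mathbf{A}(M)$ with $\{a\} \subseteq K$, so the minimality of $K$ above $\emptyset$ forces $\{a\} = K$. Hence $G_{(K)} = G_{(a)} \in \mathcal{M}$.

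There is no real obstacle here: the argument is essentially a bookkeeping exercise whose only subtle point is the initial remark that condition (3) together with monotonicity collapses $acl_M(\emptyset)$ to $\emptyset$ (outside the degenerate case $|M|=1$), so that singletons are not only algebraically closed but also minimal among non-empty algebraically closed sets.
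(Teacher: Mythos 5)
Your proof is correct and follows essentially the same route as the paper: the paper's (very terse) argument is precisely that condition (3) of Definition \ref{minimal} gives $P^{min}_{\mathbf{A}(M)} = \{\{a\} : a \in M\}$, after which the claim is immediate from the definition of $\mathbf{j}_M$, and your two inclusions (together with the observation that $acl_M(\emptyset)=\emptyset$) just spell this out in detail. The extra care about the degenerate case $|M|\leq 1$ is a harmless refinement the paper leaves implicit.
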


	\begin{proof} Notice that by the third assumption in Definition \ref{minimal} we have that $P^{min}_{\mathbf{A}(M)} = \{ \{ a \} : a \in M \}$, and so directly by the definition of the interpretation $\mathbf{j}_M$ (cf. Theorem \ref{exp_auto_th}) we have that  $\mathbf{j}_M(P^{min}_{\mathbf{A}(M)}) = \mathcal{M}$.
\end{proof}

	We will use the suggestive notation $\mathcal{M} = \{ G_{(a)} : a \in M \}$ also below.
	
	\begin{definition} Let $M$ and $N$ be structures and consider $Aut(M)$ (resp. $Aut(N)$) as acting naturally on $M$ (resp. $N$). We say that $(Aut(M), M)$ and $(Aut(N), N)$ are isomorphic as permutation groups if there exists a bijection $f: M \rightarrow N$ such that the map $h \mapsto f h f^{-1}$ is an isomorphism from $Aut(M)$ onto $Aut(N)$.
\end{definition}

	We recall the statement of Theorem \ref{main_theorem} and prove it.

\begin{theorem1} Let $\mathbf{K}_*$ be the class of countable structures $M$ satisfying:
	\begin{enumerate}[(1)]
	\item $M$ has the strong small index property;
	\item for every finite $A \subseteq M$, $acl_M(A)$ is finite;
	\item for every $a \in M$, $acl_M(\{ a \}) = \{ a \}$.
\end{enumerate}
Then for $M, N \in \mathbf{K}_*$, $Aut(M)$  and $Aut(N)$ are isomorphic as abstract groups if and only if $(Aut(M), M)$ and $(Aut(N), N)$ are isomorphic as permutation groups.
\end{theorem1}

	\begin{proof}[Proof of Theorem \ref{main_theorem}] Let $M, N \in \mathbf{K}_*$, and suppose that $F: Aut(M) \cong Aut(N)$. Passing to canonical relational structures (cf. \cite[pg. 26]{cameron_oligo}), i.e. adding a relation symbol $R$ of arity $n$ for every $n$-ary $Aut(M)$-orbit $\Omega \subseteq M^n$, we can assume without loss of generality that $M$ and $N$ are homogeneous. Now, by Theorem \ref{exp_auto_th}(2), we have that the isomorphism $F$ induces the isomorphism:
	$$\hat{F} = \mathbf{j}_N^{-1}F\,\mathbf{j}_M : ExAut(M) \cong ExAut(N).$$
	In particular, $\hat{F}$ maps $P^{min}_{\mathbf{A}(M)}$ onto $P^{min}_{\mathbf{A}(N)}$. Furthermore, by Proposition \ref{prop_for_recons} we have:
	$$ \mathbf{j}_M(P^{min}_{\mathbf{A}(M)}) = \mathcal{M} \; \text{ and } \; \mathbf{j}_N(P^{min}_{\mathbf{A}(N)}) = \mathcal{N}.$$
Hence, $\hat{F}$ induces the bijection $f: M \rightarrow N$ such that (recall that $\hat{F} = \mathbf{j}_N^{-1}F\,\mathbf{j}_M$):
$$F(Aut(M)_{(a)}) = Aut(N)_{f(a)} \in \mathcal{N}.$$ Let $G: h \mapsto f h f^{-1}$, for $h \in Aut(M)$. We claim that $G = F$. Let in fact $h \in Aut(M)$, $a, b \in M$ and suppose that $F(h)(f(a)) = f(b)$. Then:
\[ \begin{array}{rcl}
	F(h)(f(a)) = f(b) & \Leftrightarrow & F(h)Aut(N)_{(f(a))}(F(h))^{-1} = Aut(N)_{(f(b))} \\
					  & \Leftrightarrow & hAut(M)_{(a)}h^{-1} = Aut(M)_{(b)}\\
					  & \Leftrightarrow & h(a) = b. 
\end{array}	\]
So, $fhf^{-1}(f(a)) = fh(a) = f(b)$, as wanted. Hence, $f: M \rightarrow N$ witnesses that $(Aut(M), M)$ and $(Aut(N), N)$ are isomorphic as permutation groups. Notice that the ``moreover part'' of the theorem is clear from the proof (since $G = F$).
\end{proof}

	\begin{definition} We say that two structures $M$ and $N$ are bi-definable if there is a bijection $f:M \rightarrow N$ such that for every $A \subseteq M^n$, $A$ is $\emptyset$-definable in $M$ if and only if $f(A)$ is $\emptyset$-definable in $N$.
\end{definition}

	\begin{fact}[\cite{rubin}, Proposition 1.3]\label{rubin_prop} Let $M$ and $N$ be countable $\aleph_0$-categorical structures. Then the following are equivalent:
	\begin{enumerate}[(1)]
	\item $(Aut(M), M) \cong (Aut(N), N)$;
	\item $M$ and $N$ are bi-definable.
	\end{enumerate}
\end{fact}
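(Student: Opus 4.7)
The plan is to invoke the Ryll-Nardzewski theorem, which in the $\aleph_0$-categorical setting characterizes $\emptyset$-definable subsets of $M^n$ (resp.\ $N^n$) as exactly the $Aut(M)$-invariant (resp.\ $Aut(N)$-invariant) subsets, equivalently the finite unions of orbits of the natural coordinatewise action on $n$-tuples. With this dictionary in hand, the fact reduces to checking that a bijection $f: M \to N$ preserves $\emptyset$-definability iff the conjugation map $h \mapsto fhf^{-1}$ is a group isomorphism between the automorphism groups.

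For the direction (2)$\Rightarrow$(1), assume $f: M \to N$ is a bijection witnessing bi-definability. Then $h \mapsto f h f^{-1}$ is a group isomorphism from $Sym(M)$ to $Sym(N)$, and the task is to see that it restricts to an isomorphism $Aut(M) \cong Aut(N)$. By Ryll-Nardzewski, $h \in Sym(M)$ lies in $Aut(M)$ iff it preserves every $\emptyset$-definable $R \subseteq M^n$. Given $h \in Aut(M)$ and $S \subseteq N^n$ $\emptyset$-definable in $N$, the bi-definability of $f$ makes $f^{-1}(S) \subseteq M^n$ $\emptyset$-definable in $M$; hence $h$ preserves $f^{-1}(S)$, and therefore $fhf^{-1}$ preserves $S$. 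Symmetrically, $f^{-1} \alpha f \in Aut(M)$ for every $\alpha \in Aut(N)$.

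For the direction (1)$\Rightarrow$(2), assume $f: M \to N$ is a bijection such that $\pi_f: h \mapsto fhf^{-1}$ is an isomorphism $Aut(M) \cong Aut(N)$. The key observation is the intertwining identity $(fhf^{-1})(f(\bar a)) = f(h(\bar a))$ applied coordinatewise on $n$-tuples, which says that $f^n: M^n \to N^n$ conjugates the natural action of $Aut(M)$ on $M^n$ into that of $Aut(N)$ on $N^n$ via $\pi_f$. Consequently $f^n$ maps $Aut(M)$-orbits on $M^n$ bijectively to $Aut(N)$-orbits on $N^n$. Take $A \subseteq M^n$ which is $\emptyset$-definable in $M$; by Ryll-Nardzewski it is a finite union of $Aut(M)$-orbits, so $f(A)$ is a finite union of $Aut(N)$-orbits, and applying Ryll-Nardzewski again on the $N$-side, $f(A)$ is $\emptyset$-definable in $N$. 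The converse direction uses the symmetric argument with $f^{-1}$ and $\pi_f^{-1}$.

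I do not foresee any real obstacle; both directions are essentially bookkeeping once one feeds in Ryll-Nardzewski. The only mildly delicate point is ensuring the identification of $\emptyset$-definable sets with unions of orbits, which requires $\aleph_0$-categoricity; without it one only gets the weaker equivalence between automorphism-invariant sets and those invariant under the logic action, so the hypothesis cannot be dropped in (1)$\Rightarrow$(2).
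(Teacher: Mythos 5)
Your argument is correct: both directions go through exactly as you describe, with the Ryll--Nardzewski theorem supplying the identification of $\emptyset$-definable subsets of $M^n$ with $Aut(M)$-invariant sets (finite unions of orbits), and the intertwining identity $(fhf^{-1})(f(\bar a)) = f(h(\bar a))$ carrying orbits to orbits in the direction (1)$\Rightarrow$(2). Note that the paper itself offers no proof of this statement --- it is quoted as a Fact from Rubin's paper (Proposition 1.3) --- so there is no in-paper argument to compare against; your proof is the standard one, and your closing remark correctly locates where $\aleph_0$-categoricity is genuinely needed, namely in (1)$\Rightarrow$(2) to pass from automorphism-invariance back to $\emptyset$-definability (the direction (2)$\Rightarrow$(1) in fact uses only that automorphisms are exactly the permutations preserving all $\emptyset$-definable relations, which holds without categoricity).
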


	\begin{proof}[Proof of Corollary \ref{reconstr_bi-def}] Let $M, N \in \mathbf{K}_*$, and suppose that $Aut(M) \cong Aut(N)$. As before, passing to canonical relational structures, we can assume without loss of generality that $M$ and $N$ are homogeneous. Furthermore, since $M$ and $N$ are $\aleph_0$-categorical, this passage preserves definability. Now, since $M$ and $N$ are $\aleph_0$-categorical and with no algebraicity, the conditions of Theorem \ref{main_theorem} are met (cf. Remark \ref{remark_cat}), and so we have that $(Aut(M), M) \cong (Aut(N), N)$ are isomorphic as permutation groups. Hence, by Fact \ref{rubin_prop}, we are done. Notice that the ``moreover part'' of the corollary is taken care of by the ``moreover part'' of Theorem~\ref{main_theorem}.
\end{proof}

%\section{Outer Automorphisms}

%	\begin{theorem} Let $R_n$ be the random $K_n$-free graph ($n \geq 3$). Then $Aut(R_n)$ is complete.
%\end{theorem}

	We now pass to the proof of Theorem \ref{finite_grps}. 
	
	\begin{fact}[Frucht's Theorem \cite{frucht}]\label{frucht} Every finite group is the group of automorphisms of a finite graph.
\end{fact}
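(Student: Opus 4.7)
The plan is to use the classical Cayley graph approach. First I would fix an enumeration $S = \{s_1, \ldots, s_k\}$ of some generating set of the finite group $K$ (for instance, $S = K \setminus \{e\}$). Form the colored Cayley digraph $\Gamma_0$ whose vertex set is $K$ and whose directed edges are the pairs $(g, g s_i)$, labelled with color $i$. A routine verification shows that the automorphism group of $\Gamma_0$ as a colored directed graph is exactly the image of $K$ under its left regular representation: left translation by any $h \in K$ preserves both colors and directions, while conversely any color- and direction-preserving automorphism $\varphi$ of $\Gamma_0$ is determined by $\varphi(e)$ via $\varphi(g) = \varphi(e) \cdot g$, so it coincides with left translation by $\varphi(e)$.

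Next I would convert $\Gamma_0$ into an ordinary finite undirected graph $\Gamma$ without altering its automorphism group. The key device is to replace every colored directed edge by a rigid \emph{gadget}. A convenient choice: for the $i$-th color, replace a directed edge $g \to g s_i$ by a path $g \,\text{---}\, x \,\text{---}\, y \,\text{---}\, g s_i$ together with a pendant decoration (e.g., a path of length $2i-1$ attached to $x$ and a path of length $2i$ attached to $y$). The decorations are chosen so that (a) gadgets of different colors are pairwise non-isomorphic, and (b) within a single gadget the two internal vertices $x, y$ are distinguishable by their local structure, which pins down the orientation of the edge.

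With the gadgets in place, I would then show $\mathrm{Aut}(\Gamma) \cong K$. The ``original'' vertices of $K$ are recognizable inside $\Gamma$ as those of high degree not belonging to any pendant decoration, so any automorphism of $\Gamma$ must permute them among themselves. The rigidity of the gadgets forces such an automorphism to preserve both the color (by (a)) and the direction (by (b)) of each edge, hence to restrict to a colored-digraph automorphism of $\Gamma_0$, which by the first step is left multiplication by an element of $K$. Conversely, each left multiplication lifts uniquely to an automorphism of $\Gamma$ by acting trivially on each gadget's interior modulo the induced permutation of endpoints.

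The main obstacle, and the only step requiring real care, is the gadget design: one must verify both that the decorations make the copy of $K$ a distinguished orbit inside $\mathrm{Aut}(\Gamma)$ and that no spurious symmetries are introduced—neither internally to a single gadget nor by isomorphisms between distinct gadgets—so that the embedding $K \hookrightarrow \mathrm{Aut}(\Gamma)$ is actually surjective. Once this combinatorial bookkeeping is dispatched, the theorem follows.
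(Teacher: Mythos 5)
The paper does not prove this statement at all: it is imported as a black-box \emph{Fact} with a citation to Frucht's original 1938 article, so there is no ``paper proof'' to compare against. Your sketch is the classical argument --- Cayley colour digraph on $K$ with respect to a generating set, automorphism group of the coloured digraph identified with the left regular representation, then replacement of each coloured directed edge by a rigid asymmetric gadget --- and it is correct in outline; this is essentially Frucht's own construction. Two points deserve explicit attention in the ``combinatorial bookkeeping'' you defer. First, the recognition of the original vertices as ``those of high degree'' only works when $2(|K|-1)$ exceeds the maximal degree inside a gadget (here $3$), so the groups of order at most $2$ must be handled separately (a one-vertex graph and a two-vertex path suffice). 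Second, when some $s_i$ is an involution, the pair $\{g, gs_i\}$ carries two oppositely oriented gadgets of the same colour, and you should check that no automorphism fixing both endpoints can swap them; with your decoration the vertex adjacent to $g$ in one gadget carries a pendant path of length $2i-1$ and in the other of length $2i$, so the swap is indeed impossible, but this is exactly the kind of spurious symmetry your last paragraph warns about and it should be verified rather than only flagged. With those checks supplied, the proof is complete.
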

	
	\begin{proof}[Proof of Theorem \ref{finite_grps}] Let $\Gamma$ be a finite graph on vertex set $\{ 0, ..., n-1 \}$ and 
	$$L_{\Gamma} = \{ P_{\ell} : \ell < n \} \cup \{ R_{\ell, k} : \ell < k < n \text{ and } \{\ell, k \} \in E_{\Gamma} \}$$ be such that the $P_{\ell}$ are unary predicates and the $R_{\ell, k}$ are binary relations. Let $\mathbf{K}_{\Gamma}$ be the class of finite $L_{\Gamma}$-models $M$ such that:
	\begin{enumerate}[(1)]
	\item $(P^M_{\ell} : \ell < n)$ is a partition of $M$;
	\item $R^M_{\ell, k}$ is a symmetric irreflexive relation on $P_{\ell} \times P_k$.
	\end{enumerate}
Notice that $\mathbf{K}_{\Gamma}$ is a free amalgamation class (cf. \cite[Definition 4]{ssip_canonical_hom}). Let $M_{\Gamma}$ be the corresponding countable homogeneous structure. By \cite[Corollary 2]{ssip_canonical_hom}, $M_{\Gamma}$ has the strong small index property, and, obviously, $M_{\Gamma}$ is $\aleph_0$-categorical and has no algebraicity. Using Corollary \ref{reconstr_bi-def} it is now easy to see that:
$$Aut(\Gamma) \cong Aut(Aut(M_{\Gamma}))/ Inn(Aut(M_{\Gamma})) = Out(Aut(M_{\Gamma})).$$
Thus, by Fact \ref{frucht} we are done.
\end{proof}


\begin{thebibliography}{10}

\bibitem{barbina}
Silvia Barbina.
\newblock {\em Automorphism Groups of $\omega$-Categorical Structures}.
\newblock PhD Thesis, University of Leeds, 2004.

\bibitem{cameron_oligo}
Peter J. Cameron.
\newblock {\em Oligomorphic Permutation Groups}.
\newblock London Mathematical Society Lecture Notes Series no. 152, Cambridge University Press, 1990.

%\bibitem{cameron}
%Peter J. Cameron.
%\newblock {\em The Random Graph has the Strong Small Index Property}.
%\newblock Discrete Math., 291:41-43, 2005.
%
%\bibitem{cameron_1}
%Peter J. Cameron.
%\newblock {\em The Random Graph}.
%\newblock In: The Mathematics of Paul Erdős II, Ronald Graham,
%Jaroslav Nesetril and Steve Butler (eds.). Springer-Verlag New York, 2013.

\bibitem{frucht}
Robert W. Frucht
\newblock {\em Herstellung von Graphen mit Vorgegebener Abstrakter Gruppe}.
\newblock Compositio Math. {\bf 6} (1938), 239-250. 

%\bibitem{henson_graphs}
%C. Ward Henson.
%\newblock {\em A Family of Countable Homogeneous Graphs}.
%\newblock Pacific J. Math., 38(01):69-83, 1971.

\bibitem{lascar_hodges_shelah}
Wilfrid Hodges, Ian Hodkinson, Daniel Lascar and Saharon Shelah.
\newblock {\em The Small Index Property for $\omega$-Stable $\omega$-Categorical Structures and for the Random Graph}.
\newblock J. London Math. Soc.  S2-{\bf 48} (1993), no. 2, 204-218.

\bibitem{lascar}
Daniel Lascar.
\newblock {\em Autour De La Propri\'et\'e Du Petit Indice}.
\newblock Proc. London Math. Soc.  S3-{\bf 62} (1991), no. 1, 25-53. 

\bibitem{non_reconstr}
Gianluca Paolini and Saharon Shelah.
\newblock {\em Automorphism Groups of Countable Stable Structures}.
\newblock Submitted. Available at: arxiv.org/abs/1712.02568.

\bibitem{Sh_Pa_Hall}
Gianluca Paolini and Saharon Shelah.
\newblock {\em The Automorphism Group of Hall's Universal Group}.
\newblock Proc. Amer. Math. Soc. {\bf 146} (2018), 1439-1445.

\bibitem{ssip_canonical_hom}
Gianluca Paolini and Saharon Shelah.
\newblock {\em The Strong Small Index Property for Free Homogeneous Structures}.
\newblock Submitted. Available at: arxiv.org/abs/1703.10517.

\bibitem{rubin}
Matatyahu Rubin.
\newblock {\em On the Reconstruction of $\aleph_0$-Categorical Structures from their Automorphism Groups}.
\newblock Proc. London Math. Soc. {\bf 3} (1994), no. 69, 225-249.

%\bibitem{truss}
%John K. Truss.
%\newblock {\em The Group of the Countable Universal Graph}.
%\newblock Math. Proc. Cambridge Philos. Soc., 98(02):213-245, 1985.

%\bibitem{siniora}
%Daoud Siniora.
%\newblock {\em Coherent Extension of Partial Automorphisms, Free Amalgamation, and Dense Locally-Finite Subgroups}.
%\newblock To appear.

\end{thebibliography}
\end{document}